\newtheorem{remark}[theorem]{Remark}
\newtheorem{algorithm}[theorem]{Algorithm}
\newcommand{\RR}{\mathbb{R}}
\newcommand{\NN}{\mathbb{N}}
\newcommand{\ctc}{c_{\mathrm{tc}}}
\author{Anne Wald\thanks{Department of Mathematics, Saarland University, PO Box 15 11 50, 66041 Saarbr\"ucken, Germany ({\tt wald@math.uni-sb.de}).}
        \and Thomas Schuster\thanks{Department of Mathematics, Saarland University, PO Box 15 11 50, 66041 Saarbr\"ucken, Germany ({\tt thomas.schuster@num.uni-sb.de}), correspondent author.}}      
\title{Sequential subspace optimization for nonlinear inverse problems}
\begin{document}

\maketitle

\begin{abstract}
 In this work we discuss a method to adapt sequential subspace optimization (SESOP), which has so far been developed for linear inverse problems in Hilbert and Banach spaces, to the case of nonlinear inverse problems. We start by revising the well-known technique for Hilbert spaces. In a next step, we introduce a method using multiple search directions that are especially designed to fit the nonlinearity of the forward operator. To this end, we iteratively project the initial value onto stripes whose shape is determined by the search direction, the nonlinearity of the operator and the noise level. We additionally propose a fast algorithm that uses two search directions. Finally we will show convergence and regularization properties for the presented method.
\end{abstract}

\begin{keywords} 
nonlinear inverse problems, iterative methods, sequential subspace optimization, multiple search directions, regularization, metric projection
\end{keywords}
\begin{AMS}
65N21; 65J15; 65J22
\end{AMS}

\section{Introduction}
In this article, we are dealing with an iterative solution of nonlinear inverse problems in Hilbert spaces. We consider the operator equation
\begin{equation} \label{operator_equation}
 F(x) = y
\end{equation}
for a nonlinear operator $F: \mathcal{D}(F)\subset X \rightarrow Y$ between Hilbert spaces $X$ and $Y$. If only noisy data $y^{\delta}$ are available, we assume that the noise level $\delta$ fulfills
\begin{displaymath} 
 \left\lVert y^{\delta} - y \right\rVert \leq \delta.
\end{displaymath}
In general, equation \eqref{operator_equation} is ill-posed, i.~e.~the solution of \eqref{operator_equation} does not depend continuously on the data. The direct inversion of such an ill-posed problem will, in the case of noisy data, most probably lead to useless results. For this reason we need to apply regularizing techniques to approximate the inverse $F^{-1}$ and find a suitable stable solution. \\
There is a great range of methods that have been developed for linear operator equations (for references, see e.~g.~\cite{louis89}, \cite{ehn00},\cite{skhk12}), some of which have been successfully adapted to nonlinear inverse problems. An overview of such methods is given in \cite{kns_itreg}, \cite{riederkp}. Well-known regularization methods are for example the Tikhonov regularization (\cite{ekn89}, \cite{scherzer93}, \cite{kr93}, \cite{neubauer92}) or Newton type methods such as the iteratively regularized Gauss-Newton method (see \cite{kss09}) or the inexact Newton method (\cite{kss09}, \cite{rieder99}). The Landweber regularization has also been successfully modified for nonlinear problems and has been thoroughly studied (\cite{hns_cl}, \cite{kss09}, \cite{kaltenbacher97}). Finally, the conjugate gradient method is another example to mention \cite{hanke97}. In fact, most regularization techniques for nonlinear problems are iterative methods. \\
Many inverse problems arising from applications in the natural sciences, engineering and other fields are nonlinear. One field of interest is inverse scattering with its many different applications for example in medical and nondestructive testing (\cite{gbpl}, \cite{ck1}, \cite{ll08}, \cite{lakhal10}). \\
The method we present is inspired by an algorithm that was first introduced in \cite{gnmz05}, \cite{emz07} for linear problems between finite dimensional vector spaces. The key idea is to use more than one search direction in each step of the iteration instead of just one as it is the case for the Landweber or the conjugate gradient method, widening the subspace in which an approximation to the solution is sought. This leads to a more challenging optimization in each step of the iteration, but it may reduce the total number of steps to obtain a satisfying approximation \cite{ss09}. This is an advantage if each step requires the costly solution of the forward problem. The Landweber iteration requires the solution of one forward problem and of one adjoint linearized problem, while there is only one parameter that has to be determined by a subspace optimization. Using more than one search direction, we want to exploit fast subspace optimization schemes and thus reduce the total number of iterations. This approach can also be interpreted as a split feasibility problem (see \cite{cekb05}) as it has been discussed in \cite{skhk12}.\\
The algorithm we want to introduce in this paper is mainly based on the algorithm as discussed in \cite{ss09}, \cite{sls08} for linear operators between Banach spaces. Due to the local character of nonlinear problems, some of the statements cannot be transfered from the linear to the nonlinear case. By making some assumptions on the nonlinear forward operator, we can still find a way to modify the algorithm to fit our requirements. A main result is that for a certain choice of search directions, we can prove convergence and regularization results. \\
In Section \ref{mathematical_setup} we first want to summarize the method for linear problems in Hilbert spaces and give an overview of the relevant techniques. The method for nonlinear inverse problems is introduced in Section \ref{section_nonlinear}, were we begin with the basic definitions before moving on to the presentation of our reconstruction methods. Section \ref{section_convergence_regularization} contains the convergence analysis and a regularization result for the proposed method.


\section{Mathematical Setup} \label{mathematical_setup}
First of all, we want to give an overview of the Sequential Subspace Optimization (SESOP) method and the corresponding REgularized SESOP (RESESOP) method applied to linear equations as discussed in \cite{ss09}, including the RESESOP algorithm for two search directions. We restrict ourselves to the SESOP method for Hilbert spaces. Transferring the results for Banach spaces to Hilbert spaces, the duality mappings turn into identity mappings and instead of general Bregman distances and Bregman projections, we use the metric distance and metric projection. We identify the duals of the spaces $X$ and $Y$ with the spaces themselves and we will drop the subscripts for the norms $\|\cdot\|_X$, $\|\cdot\|_Y$ for a better readability whenever confusion is not possible.


\subsection{Sequential Subspace Optimization in Hilbert Spaces for Linear Problems}
Let $X$,$Y$ be Hilbert spaces and $A: X\rightarrow Y$ a linear operator. We consider the operator equation
\begin{equation} \label{linearproblem}
 Ax = y
\end{equation}
with the solution set
\begin{equation}
 M_{Ax=y} := \left\lbrace x \in X  :  Ax=y \right\rbrace
\end{equation}
and assume that noisy data $y^{\delta}$ are given. \\
We want to utilize an iteration of the form 
\begin{equation}
 x_{n+1} := x_n - \sum_{i \in I_n} t_{n,i} A^*w_{n,i}
\end{equation}
to calculate a solution $x \in X$, where $A^*$ is the adjoint operator of $A$ and $I_n$ is a finite index set. The parameters $t_n:=(t_{n,i})_{i\in I_n} \in \mathbb{R}^{\lvert I_n \rvert}$ minimize the function
\begin{equation}
 h_n(t) := \frac{1}{2} \left\lVert x_n - \sum_{i \in I_n} t_i A^*w_{n,i} \right\rVert^2 + \sum_{i\in I_n} t_i \left\langle w_{n,i}, y \right\rangle.
\end{equation}
In \cite{sls08} it was shown that the minimization of $h_n(t)$ is equivalent to computing the metric projection 
\begin{displaymath}
 x_{n+1} = P_{\bigcap_{i \in I_n} H_{n,i}}(x_n) 
\end{displaymath}
onto the intersection of hyperplanes
\begin{displaymath}
 H_{n,i} := \left\lbrace x \in X \ : \ \left\langle A^* w_{n,i}, x \right\rangle - \left\langle w_{n,i},y \right\rangle = 0 \right\rbrace.
\end{displaymath}
Note, that $M_{Ax=y}\subset H_{n,i}$ for all $i \in I_n$. This motivates the regularizing sequential subspace optimization methods where we replace the hyperplanes by stripes whose width is of the order of the noise level $\delta$.\\[1ex]

\begin{definition}
 The metric projection of $x \in X$ onto a nonempty closed convex set $C \subset X$ is the unique element $P_C(x) \in C$ such that
 \begin{displaymath}
  \left\lVert x - P_C(x) \right\rVert^2 = \min_{z \in C} \left\lVert x - z \right\rVert^2.
 \end{displaymath}
 For later convenience, we use the square of the distance.\\[1ex]
\end{definition}

The metric projection onto a convex set $C$ fulfills a descent property which reads
\begin{displaymath}
 \left\lVert z - P_C(x) \right\rVert^2 \leq \left\lVert z - x\right\rVert^2 - \left\lVert P_C(x) - x \right\rVert^2
\end{displaymath}
for all $z \in C$.\\[1ex]

\begin{definition}
 For $u \in X$, $\alpha, \xi \in \RR$ with $\xi \geq 0$ we define the hyperplane
 \begin{displaymath}
  H(u,\alpha) := \left\lbrace x \in X \ : \ \left\langle u,x \right\rangle = \alpha \right\rbrace,
 \end{displaymath}
 the halfspace
 \begin{displaymath}
  H_{\leq}(u,\alpha) := \left\lbrace x \in X \ : \ \left\langle u,x \right\rangle \leq \alpha \right\rbrace
 \end{displaymath}
 and analogously $H_{\geq}(u,\alpha)$, $H_{<}(u,\alpha)$ and $H_{>}(u,\alpha)$. Finally, we define the \emph{stripe}
 \begin{displaymath}
  H(u,\alpha,\xi) := \left\lbrace x \in X \ : \ \left\lvert \left\langle u,x \right\rangle - \alpha \right\rvert \leq \xi \right\rbrace.
 \end{displaymath}
\end{definition}

Obviously we can write $H(u,\alpha,\xi) = H_{\leq}(u,\alpha+\xi) \cap H_{\geq}(u,\alpha - \xi)$ and $H(u,\alpha,0) = H(u,\alpha)$, i.~e.~the stripe corresponding to $u$ and $\alpha$ with width $2\xi$ contains the hyperplane $H(u,\alpha)$. \\
The metric projection $P_{H(u,\alpha)}(x)$ of $x \in X$ onto a hyperplane $H(u,\alpha)$ corresponds in the Hilbert space setting to an orthogonal projection and we have
\begin{equation} \label{orthogonal_projection}
 P_{H(u,\alpha)}(x) = x - \frac{\langle u,x \rangle - \alpha}{\lVert u \rVert^2}u.
\end{equation}
\vspace*{2ex} \\
The proof of the following statement can be found in \cite{skhk12} for Banach spaces and can be easily transferred to the situation in Hilbert spaces. When projecting onto the intersection of multiple hyperplanes or halfspaces, these statements turn out to be helpful tools.\\[1ex]

\begin{proposition}\label{proposition_properties_projection}
 \begin{itemize}
  \item[(i)] Let $H(u_i,\alpha_i)$, $i=1,...,N$, be hyperplanes with nonempty intersection $H:=\bigcap_{i=1,...,N} H(u_i,\alpha_i)$. The projection of $x$ onto $H$ is given by
    \begin{equation}
      P_H(x) = x - \sum_{i=1}^N \tilde{t}_i u_i,
    \end{equation}
   where $\tilde{t}=(\tilde{t}_1,...,\tilde{t}_N)$ minimizes the convex function 
    \begin{equation}
      h(t) = \frac{1}{2} \left\lVert x - \sum_{i=1}^N t_i u_i \right\rVert^2 + \sum_{i=1}^N t_i \alpha_i
    \end{equation}
   with partial derivatives
   \begin{displaymath}
    \partial_j h(t) = -\left\langle u_j, x-\sum_{i=1}^N t_i u_i \right\rangle + \alpha_j.
   \end{displaymath}
   If the vectors $u_i$, $i=1,...,N$, are linearly independent, $h$ is strictly convex and $\tilde{t}$ is unique.\\
  \item[(ii)] Let $H_i:=H_{\leq}(u_i,\alpha_i)$, $i=1,2$, be two halfspaces with linearly independent vectors $u_1$ and $u_2$. Then $\tilde{x}$ is the projection of $x$ onto $H_1\cap H_2$ iff it safisfies the Karush-Kuhn-Tucker conditions for $\min_{z \in H_1\cap H_2} \lVert z-x \rVert^2$, which read
   \begin{equation}
    \begin{split}
     &\tilde{x} = x-t_1 u_1 -t_2 u_2, \ t_1,t_2 \geq 0,\\
     &\langle u_i,\tilde{x} \rangle \leq \alpha_i, \ i=1,2, \\
     &t_i\left( \alpha_i - \langle u_i,\tilde{x} \rangle \right) \leq 0, \ i=1,2.
    \end{split}
   \end{equation}
  \item[(iii)] For $x\in H_{>}(u,\alpha)$ the projection of $x$ onto $H_{\leq}(u,\alpha)$ is given by
   \begin{equation}
    P_{H_{\leq}(u,\alpha)}(x) = P_{H(u,\alpha)}(x) = x - t_+ u,
   \end{equation}
   where
   \begin{equation}
    t_+ = \frac{\langle u,x\rangle - \alpha}{\lVert u \rVert ^2} \ > 0.
   \end{equation}
  \item[(iv)] The projection of $x \in X$ onto a stripe $H(u,\alpha,\xi)$ is given by
   \begin{equation}
    P_{H(u,\alpha,\xi)}(x)=\begin{cases}
                               P_{H_{\leq}(u,\alpha+\xi)(x)}, & x\in H_>(u,\alpha+\xi), \\
                               x,                                 & x\in H(u,\alpha,\xi), \\
                               P_{H_{\geq}(u,\alpha-\xi)(x)}, & x\in H_<(u,\alpha-\xi).
                              \end{cases}
   \end{equation}
 \end{itemize}
\end{proposition}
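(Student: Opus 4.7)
The four parts are closely linked, so the plan is to handle (i) by a direct Lagrange-multiplier / first-order-condition argument, (ii) by recognizing it as the KKT statement for a convex quadratic program, (iii) as an easy specialization of (ii), and (iv) by a case split using (iii).

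For (i), I would observe that $\min_{z \in H} \|z - x\|^2$ is a strictly/weakly convex problem with linear equality constraints, so by the Lagrange multiplier theorem the minimizer has the form $z = x - \sum_{i=1}^N t_i u_i$ for some $t \in \RR^N$. Substituting this back and imposing the constraints $\langle u_j, z\rangle = \alpha_j$ yields exactly the first-order conditions $\partial_j h(t) = 0$ for the function $h$ in the statement, since
\begin{displaymath}
 \partial_j h(t) = -\Bigl\langle u_j,\, x - \sum_{i=1}^N t_i u_i\Bigr\rangle + \alpha_j.
\end{displaymath}
Convexity of $h$ follows from its Hessian being the Gram matrix $G_{ij} = \langle u_i, u_j\rangle$, which is positive semidefinite in general and positive definite (hence $h$ strictly convex, $\tilde t$ unique) precisely when the $u_i$ are linearly independent. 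Conversely, any critical point of $h$ gives a point in $H$ whose difference from $x$ is orthogonal to $\mathrm{span}\{u_i\}$, hence by the orthogonal decomposition of $z - x$ for any $z \in H$ it minimizes $\|z-x\|^2$.

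For (ii), the problem $\min_{z \in H_1 \cap H_2}\|z-x\|^2$ is a convex quadratic program with two affine inequality constraints, whose constraint qualification is automatic (linear constraints). Hence by the standard KKT theorem, $\tilde x$ is a minimizer iff there exist multipliers $t_1, t_2 \ge 0$ (dual feasibility) such that stationarity $\nabla_z\bigl(\tfrac12\|z-x\|^2 + t_1(\langle u_1,z\rangle - \alpha_1) + t_2(\langle u_2,z\rangle - \alpha_2)\bigr) = 0$ holds at $\tilde x$, giving $\tilde x = x - t_1 u_1 - t_2 u_2$, together with primal feasibility $\langle u_i,\tilde x\rangle \le \alpha_i$ and complementary slackness. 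The stated condition $t_i(\alpha_i - \langle u_i,\tilde x\rangle) \le 0$ combined with $t_i \ge 0$ and $\alpha_i - \langle u_i,\tilde x\rangle \ge 0$ forces this product to vanish, which is precisely complementary slackness.

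For (iii), since $\langle u,x\rangle > \alpha$, the unconstrained minimizer $z=x$ is infeasible, so by continuity the minimizer on $H_{\le}(u,\alpha)$ must lie on the boundary $H(u,\alpha)$. This reduces the problem to projecting onto the hyperplane $H(u,\alpha)$, for which the explicit formula \eqref{orthogonal_projection} gives $t_+ = (\langle u,x\rangle - \alpha)/\|u\|^2$, manifestly positive.

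For (iv), I would split into three cases according to the three alternatives in the definition of the stripe $H(u,\alpha,\xi) = H_{\le}(u,\alpha+\xi) \cap H_{\ge}(u,\alpha-\xi)$. If $x \in H(u,\alpha,\xi)$, then $x$ itself is feasible with zero distance, so $P_{H(u,\alpha,\xi)}(x) = x$. If $x \in H_>(u,\alpha+\xi)$, apply (iii) to obtain that the projection onto $H_{\le}(u,\alpha+\xi)$ lands on $H(u,\alpha+\xi)$, and this point trivially satisfies $\langle u, P(x)\rangle = \alpha+\xi \ge \alpha - \xi$, so it lies in the stripe and hence is the projection onto the stripe itself. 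The case $x \in H_<(u,\alpha-\xi)$ is symmetric. The only mildly delicate step, and the one I expect to require a separate line, is checking in the second and third cases that the half-space projection supplied by (iii) indeed lies in the opposite half-space defining the stripe — but since $\xi \ge 0$ this is immediate, so no real obstacle arises and the proof is essentially a direct verification throughout.
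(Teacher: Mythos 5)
Your proof is correct in all four parts. The paper itself does not prove this proposition but defers to \cite{skhk12}, where the Banach-space version is established by essentially the same convex-analytic route (first-order conditions for the quadratic subproblem, KKT for the two-halfspace case, and case analysis for the stripe); your argument is a faithful and complete Hilbert-space specialization of that approach, including the two points that actually need checking (that complementary slackness follows from the stated inequality form, and that the halfspace projection in (iv) lands inside the opposite bounding halfspace because $\xi \geq 0$).
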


The following algorithm provides a method to compute the metric projection
\begin{displaymath}
 P_{M_{Ax=y}}(x_0)
\end{displaymath}
of $x_0 \in X$ onto the solution set $M_{Ax=y}$ in the case of exact data $y \in \mathcal{R}(A)$.\\[1ex]

\begin{algorithm} \label{algo1}
 Choose an initial value $x_0 \in X$. At iteration $n \in \NN$, choose a finite index set $I_n$ and search directions $A^*w_{n,i}$ with $w_{n,i} \in Y$ and $i\in I_n$. Compute the new iterate as
 \begin{equation}
  x_{n+1} := P_{H_n}(x_n),
 \end{equation}
 where
 \begin{equation}
  H_n := \bigcap_{i \in I_n} H(A^* w_{n,i}, \langle w_{n,i},y \rangle).
 \end{equation}
\end{algorithm}

We have $M_{Ax=y} \subset H_n$  and for all $z\in M_{Ax=y}$ holds
\begin{equation}
 \langle w_{n,i}, Ax_{n+1} -y \rangle=\langle A^*w_{n,i}, x_{n+1}-z \rangle=0
\end{equation}
for all $i \in I_n$. We define the search space 
\begin{displaymath}
 U_n := \mathrm{span}\left\lbrace A^*w_{n,i} \ : \ i\in I_n \right\rbrace
\end{displaymath}
as the span of the search directions used in iteration $n$ and note that $(x_{n+1} - z) \bot U_n$.\\
As stated in Proposition \ref{proposition_properties_projection}, $x_{n+1}$ can be computed by minimizing the convex function $h$. The search directions $A^*w_{n,i}$ spanning the subspace in which a minimizing solution is sought are fixed, so the minimization does not require any costly applications of $A$ or its adjoint $A^*$. The additional cost due to higher dimensional search spaces is comparatively minor.\\
For weak convergence, the current gradient $A^*(Ax_n-y)$ of the functional $\frac{1}{2}\lVert Ax-y \rVert^2$ needs to be included in the search space to guarantee an estimate of the form
\begin{equation}
 \lVert z- x_{n+1} \rVert^2 \leq \lVert z-x_n \rVert^2 - \frac{\lVert R_n \rVert^2}{\lVert A \rVert^2}
\end{equation}
for all $z\in M_{Ax=y}$, where $R_n:=Ax_n-y$ is the current residual. \\


\subsection{Regularizing Sequential Subspace Optimization}
If only noisy data $y^{\delta}\in Y$ are given, we modify Algorithm \ref{algo1} to turn it into a regularizing method. We define the two canonical sets of search directions
\begin{equation}
 G_n^{\delta} := \left\lbrace A^*(Ax_k^{\delta} - y^{\delta}) \ : \ 0 \leq k \leq n \right\rbrace
\end{equation}
and
\begin{equation}
 D_n^{\delta} := \left\lbrace x_k^{\delta} - x_l^{\delta} \ : 0 \leq l < k \leq n \right\rbrace.
\end{equation}

\begin{algorithm} \label{algo2}
 Choose an initial value $x_0^{\delta}:=x_0$ and fix some constant $\tau>1$. At iteration $n \in \NN$, choose a finite index set $I_n^{\delta}$ and search directions $A^*w_{n,i}^{\delta} \in G_n^{\delta}\cap D_n^{\delta}$ as defined above. If the residual $R_n^{\delta}$ satisfies the discrepancy principle
 \begin{equation}
  \lVert R_n^{\delta} \rVert \leq \tau\delta,
 \end{equation}
 stop iterating. Otherwise, compute the new iterate as
 \begin{equation}
  x_{n+1}^{\delta} := x_n^{\delta} - \sum_{i \in I_n^{\delta}} t_{n,i}^{\delta} A^*w_{n,i}^{\delta}, 
 \end{equation}
 choosing $t_n^{\delta}=(t_{n,i}^{\delta})_{i \in I_n^{\delta}}$ such that
 \begin{equation} \label{iterate_proj2}
  x_{n+1}^{\delta} \in H_n^{\delta} := \bigcap_{i \in I_n^{\delta}} H\big(A^*w_{n,i}^{\delta}, \langle w_{n,i}^{\delta},y^{\delta} \rangle, \delta \lVert w_{n,i}^{\delta} \rVert\big)
 \end{equation}
 and such that an inequality of the form
 \begin{equation} \label{descent2}
  \lVert z-x_{n+1}^{\delta} \rVert^2 \leq \lVert z-x_n^{\delta} \rVert^2 - C\lVert R_n^{\delta} \rVert^2
 \end{equation}
 holds for all $z \in M_{Ax=y}$ for some constant $C>0$.\\[1ex]
\end{algorithm}

The choice $A^*w_{n,i}^{\delta} \in G_n^{\delta}\cap D_n^{\delta}$ exploits the linearity of the operator $A$, which yields a recursion for the computation of the search directions. This part can not be adapted to the nonlinear case. \\
However, we note that $M_{Ax=y}\subset H_n^{\delta}$ because for $z \in M_{Ax=y}$ we have
\begin{displaymath}
 \big\lvert \langle A^*w_{n,i}^{\delta}, z \rangle - \langle w_{n,i}^{\delta}, y^{\delta} \rangle \big\rvert = \big\lvert \langle w_{n,i}^{\delta}, y-y^{\delta} \rangle \big\rvert \leq \delta \lVert w_{n,i}^{\delta} \rVert.
\end{displaymath}
Due to \eqref{descent2}, the sequence $\lVert z - x_n^{\delta} \rVert$ decreases for a fixed noise level $\delta$, i.~e.~the discrepancy principle yields a finite stopping index $n_*=n_*(\delta):=\min\lbrace n \in \NN: \lVert R_n^{\delta} \rVert \leq \tau\delta \rbrace$.
\vspace*{2ex} \\
It is possible to prove convergence results and other interesting statements for certain choices of search directions for RESESOP in the linear case. We refer the reader to \cite{skhk12} and \cite{ss09}. Before looking at a suitable method for nonlinear inverse problems, we want to take a look at the important case of two search directions.\\


\subsection{RESESOP with two search directions}
Before looking at a suitable method for nonlinear inverse problems, we want to summarize this fast way to compute $x_{n+1}^{\delta}$ according to Algorithm \ref{algo2}, using only two search directions, such that \eqref{iterate_proj2} and \eqref{descent2} are valid. As illustrated in \cite{ss09}, the projection of $x \in X$ onto the intersection of two halfspaces can be computed by at most two projections, if $x$ is already contained in one of them. The following proposition provides the basic results for Algorithm \ref{algo3}.\\[1ex]

\begin{proposition} \label{prop_proj_onto_two}
 Let $H_j:=H_{\leq}(u_j,\alpha_j)$, $j=1,2$, be two halfspaces with $H_1\cap H_2 \neq \emptyset$. Let $x\in H_>(u_1,\alpha_1) \cap H_2$. The metric projection of $x$ onto $H_1 \cap H_2$ can be computed by at most two metric projections onto (intersections of) the bounding hyperplanes by the following steps:
 \begin{itemize}
  \item[(i)] Compute
   \begin{displaymath}
    x_1 := P_{H(u_1,\alpha_1)}(x) = x - \frac{\langle u_1,x \rangle - \alpha_1}{\lVert u_1 \rVert^2} u_1.
   \end{displaymath}
   Then, for all $z \in H_1$, we have
   \begin{equation} \label{descent_step1}
    \lVert z - x_1 \rVert^2 = \lVert z-x \rVert^2 - \left(\frac{\langle u_1,x \rangle - \alpha_1}{\lVert u_1 \rVert}\right)^2.
   \end{equation}
   If $x_1 \in H_2$, we are done. Otherwise, go to step (ii):
  \item[(ii)] \label{step2_prop_proj_onto_two} Compute
   \begin{displaymath}
    x_2 := P_{H_1 \cap H_2}(x_1).
   \end{displaymath}
   Then $x_2 = P_{H_1 \cap H_2}(x)$ and for all $z \in H_1 \cap H_2$ we have
   \begin{equation} \label{descent_step2}
    \lVert z - x_2 \rVert^2 \leq \lVert z - x \rVert^2 - \left( \left( \frac{\langle u_1,x \rangle - \alpha_1}{\lVert u_1 \rVert} \right)^2 + \left( \frac{\langle u_2,x_1 \rangle - \alpha_2}{\gamma \lVert u_2 \rVert} \right)^2 \right),
   \end{equation}
   where
   \begin{equation}
    \gamma := \left( 1-\left( \frac{\big\lvert \langle u_1,u_2 \rangle \big\rvert}{\lVert u_1 \rVert \cdot \lVert u_2 \rVert} \right)^2 \right)^{\frac{1}{2}} \ \in (0,1].
   \end{equation}
 \end{itemize}
\end{proposition}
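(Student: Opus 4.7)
For part (i), the formula for $x_1$ is immediate from \eqref{orthogonal_projection}. The descent bound follows from the substitution $x_1 = x - t_+ u_1$ with $t_+ = (\langle u_1, x\rangle - \alpha_1)/\|u_1\|^2 > 0$: expanding $\|z - x_1\|^2 = \|z - x\|^2 + 2t_+ \langle z - x, u_1\rangle + t_+^2 \|u_1\|^2$ and using $\langle u_1, z\rangle \leq \alpha_1 = \langle u_1, x\rangle - t_+ \|u_1\|^2$ for $z \in H_1$ immediately yields \eqref{descent_step1}.

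For part (ii), the first task is to verify the identity $x_2 = P_{H_1 \cap H_2}(x)$. The key observation is that step (ii) is entered only when $x_1 \notin H_2$, and then $\langle u_1, u_2\rangle < 0$ is forced by rewriting $\langle u_2, x_1\rangle - \alpha_2 = (\langle u_2, x\rangle - \alpha_2) - t_+\langle u_1, u_2\rangle$ and using $x \in H_2$ together with $t_+ > 0$. Setting $\tilde x := P_{H_1 \cap H_2}(x)$ and applying Proposition \ref{proposition_properties_projection}(ii) yields KKT multipliers $s_1, s_2 \geq 0$ with $\tilde x = x - s_1 u_1 - s_2 u_2$. I would rule out each single-active- or zero-active-constraint alternative (they collapse respectively to $\tilde x = x_1 \notin H_2$, $\tilde x = x \notin H_1$, or $\tilde x = x$, all contradictions), so both constraints are active at $\tilde x$. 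Solving the resulting $2 \times 2$ system by eliminating along $u_1$ gives $s_2 = (\langle u_2, x_1\rangle - \alpha_2)/(\gamma^2 \|u_2\|^2) > 0$ and $s_1 = t_+ - s_2 \langle u_1, u_2\rangle/\|u_1\|^2$; since $\langle u_1, u_2\rangle < 0$, the crucial inequality $s_1 > t_+$ follows. Setting $r_1 := s_1 - t_+$ and $r_2 := s_2$ then furnishes nonnegative multipliers that show, via the same KKT criterion, that $\tilde x = P_{H_1 \cap H_2}(x_1) = x_2$.

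The descent estimate \eqref{descent_step2} combines part (i), applied to $z \in H_1 \supset H_1 \cap H_2$, with the descent property of the metric projection onto the convex set $H_1 \cap H_2$ recalled after the definition, which gives $\|z - x_2\|^2 \leq \|z - x_1\|^2 - \|x_1 - x_2\|^2$. To evaluate $\|x_1 - x_2\|$, I would observe that $x_2$ lies on both bounding hyperplanes while $x_1 \in H(u_1, \alpha_1)$, so the displacement $x_1 - x_2$ is parallel to the component $\tilde u_2 := u_2 - \langle u_1, u_2\rangle u_1/\|u_1\|^2$ of $u_2$ orthogonal to $u_1$, which has norm $\gamma \|u_2\|$. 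Writing $x_2 = x_1 - \lambda \tilde u_2$ and solving $\langle u_2, x_2\rangle = \alpha_2$ for $\lambda$ gives $\|x_1 - x_2\|^2 = ((\langle u_2, x_1\rangle - \alpha_2)/(\gamma \|u_2\|))^2$, which closes the argument.

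The main obstacle is the KKT sign check $s_1 \geq t_+$: this is where the condition $\langle u_1, u_2\rangle < 0$ intrinsic to step (ii) being invoked is exploited, and it is essential to first rule out the degenerate KKT cases before performing the algebraic computation.
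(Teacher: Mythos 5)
Your proof is correct and follows exactly the route the paper points to: formula \eqref{orthogonal_projection} for step (i), and for step (ii) the KKT characterization from Proposition \ref{proposition_properties_projection}(ii) together with the descent property of the metric projection --- the paper itself only cites \cite{ss09} and gives these hints, and your case analysis (forcing $\langle u_1,u_2\rangle<0$, ruling out the inactive-constraint cases, shifting the multipliers by $t_+$) is the standard way to fill them in. The only quibble is in (i): your expansion yields ``$\leq$'' for $z$ in the halfspace $H_1$, whereas \eqref{descent_step1} is written with ``$=$'' (which actually holds only for $z$ on the bounding hyperplane $H(u_1,\alpha_1)$); the inequality is all that is needed downstream, so this is a defect of the statement rather than of your argument.
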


\begin{proof}
 The proof of Proposition \ref{prop_proj_onto_two} can be found in \cite{ss09}. In our Hilbert space setting, equation \eqref{descent_step1} requires no further proof as it follows directly from equation \eqref{orthogonal_projection}. To obtain the descent property \eqref{descent_step2} in step (ii), the statements from Proposition \ref{proposition_properties_projection} are used to find suitable estimates.
\hfill
\end{proof}
\vspace{2mm}

The following choice of search directions along with Algorithm \ref{algo3} assures that the projections in each step can be calculated uniquely according to Proposition \ref{prop_proj_onto_two} and yields a fast regularizing method to compute a solution of \eqref{linearproblem} using noisy data.\\[1ex]

\begin{algorithm} \label{algo3}
 Choose an initial value $x_0^{\delta} \in X$. In the first step ($n=0$) take $u_0^{\delta}$ and then, for all $n\geq 1$, the search directions $\lbrace u_n^{\delta}, u_{n-1}^{\delta} \rbrace$ in Algorithm \ref{algo2}, where 
 \begin{displaymath}
  u_n^{\delta} := A^*w_n^{\delta}, w_n^{\delta}:= Ax_n^{\delta} - y^{\delta}.
 \end{displaymath}
 Define $H_{-1}^{\delta} := X$ and, for $n \in \NN$, the stripes
 \begin{displaymath}
  H_n^{\delta} := H\big(u_n^{\delta},\alpha_n^{\delta},\delta \lVert R_n^{\delta} \rVert \big)
 \end{displaymath}
 with
 \begin{displaymath}
  \alpha_n^{\delta} := \langle u_n^{\delta}, x_n^{\delta} \rangle - \lVert R_n^{\delta} \rVert^2.
 \end{displaymath}
 As long as $\lVert R_n^{\delta} \rVert > \tau\delta$ we have
 \begin{equation}
  x_n^{\delta} \in H_{>}\big( u_n^{\delta}, \alpha_n^{\delta} + \delta \lVert R_n^{\delta} \rVert \big) \cap H_{n-1}^{\delta}.
 \end{equation}
 Compute $x_{n+1}^{\delta}$ by the following steps:\\
 \begin{itemize}
  \item[(i)] Compute 
   \begin{displaymath}
    \tilde{x}_{n+1}^{\delta} := P_{H(u_n^{\delta},\alpha_n^{\delta} + \delta \lVert R_n^{\delta} \rVert)}(x_n^{\delta})
   \end{displaymath}
   by
   \begin{displaymath}
    \tilde{x}_{n+1}^{\delta} = x_n^{\delta} - \frac{\langle u_n^{\delta},x_n^{\delta} \rangle - \left(\alpha_n^{\delta}+ \delta \lVert R_n^{\delta} \rVert\right)}{\lVert u_n^{\delta} \rVert^2} u_n^{\delta} 
    = x_n^{\delta} - \frac{\lVert R_n^{\delta} \rVert \left( \lVert R_n^{\delta} \rVert - \delta \right)}{\lVert u_n^{\delta} \rVert^2} u_n^{\delta}.
   \end{displaymath}
   Then, for all $z \in M_{Ax=y}$ we have
   \begin{displaymath}
    \left\lVert z - \tilde{x}_{n+1}^{\delta} \right\rVert^2 = \left\lVert z - x_{n}^{\delta} \right\rVert^2 - \left(\frac{\lVert R_n^{\delta} \rVert (\lVert R_n^{\delta} \rVert-\delta)}{\lVert u_n^{\delta} \rVert}\right)^2.
   \end{displaymath}
   If $\tilde{x}_{n+1}^{\delta} \in H_{n-1}^{\delta}$, we have $\tilde{x}_{n+1}^{\delta} = P_{H_n^{\delta}\cap H_{n-1}^{\delta}}(x_n^{\delta})$, so we define $x_{n+1}^{\delta}:=\tilde{x}_{n+1}^{\delta}$ and are done. Otherwise go to step
  \item[(ii)] Depending on $\tilde{x}_{n+1}^{\delta} \in H_{\gtrless}(u_{n-1}^{\delta},\alpha_{n-1}^{\delta} \pm \delta \lVert R_{n-1}^{\delta} \rVert)$, compute 
   \begin{displaymath}
    x_{n+1}^{\delta} := P_{H(u_n^{\delta}, \alpha_{n}^{\delta}+\delta \lVert R_n^{\delta} \rVert) \cap H(u_{n-1}^{\delta}, \alpha_{n-1}^{\delta}\pm\delta \lVert R_{n-1}^{\delta} \rVert)}(\tilde{x}_{n+1}^{\delta}),
   \end{displaymath}
   i.~e.
   \begin{displaymath}
    x_{n+1}^{\delta} = \tilde{x}_{n+1}^{\delta} - t_{n,n}^{\delta} u_n^{\delta} - t_{n,n-1}^{\delta} u_{n-1}^{\delta}
   \end{displaymath}
   such that $(t_{n,n}^{\delta},t_{n,n-1}^{\delta})$ minimizes
   \begin{displaymath}
    h_2(t_1,t_2) := \frac{1}{2} \left\lVert x_n^{\delta} - t_1 u_n^{\delta} - t_2^{\delta} u_{n-1}^{\delta} \right\rVert^2 + t_1(\alpha_n^{\delta} + \delta \lVert R_n^{\delta} \rVert) + t_2(\alpha_{n-1}^{\delta} \pm \delta \lVert R_{n-1}^{\delta} \rVert).
   \end{displaymath}
   Then we have $x_{n+1}^{\delta} = P_{H_{n}^{\delta} \cap H_{n-1}^{\delta}}(x_n^{\delta})$ and for all $z \in M_{Ax=y}$ we have
   \begin{displaymath}
    \lVert z - x_{n+1}^{\delta} \rVert^2 \leq \lVert z - x_n^{\delta} \rVert - S_n^{\delta}
   \end{displaymath}
   with
   \begin{displaymath}
    S_n^{\delta} := \left( \frac{\lVert R_n^{\delta} \rVert(\lVert R_n^{\delta} \rVert- \delta)}{\lVert u_n^{\delta} \rVert} \right)^2 + \left( \frac{\big\lvert \langle u_{n-1}^{\delta}, \tilde{x}_{n+1}^{\delta} \rangle - (\alpha_{n-1}^{\delta} \pm \delta \lVert R_{n-1}^{\delta} \rVert) \big\rvert}{\gamma_n \lVert u_{n-1}^{\delta} \rVert} \right)^2
   \end{displaymath}
   and
   \begin{displaymath}
    \gamma_n := \left( 1-\left( \frac{\big\lvert \langle u_{n}^{\delta}, u_{n-1}^{\delta} \rangle \big\rvert}{\lVert u_{n}^{\delta} \rVert \lVert u_{n-1}^{\delta} \rVert} \right)^2 \right)^{\frac{1}{2}} \in (0,1].
   \end{displaymath}
 \end{itemize} 
\end{algorithm}

The advantage of this algorithm is that using the current gradient as a search direction assures that the descent property \eqref{descent2} holds, yielding convergence, while the use of the gradient from the latest step speeds up the descent additionally: the larger the second summand in $S_n^{\delta}$, the greater the descent.\\[1ex]


\section{Sequential Subspace Optimization for Nonlinear Inverse Problems} \label{section_nonlinear}
We now want to develop a method for nonlinear inverse problems based on the one we introduced in the section above and consider the operator equation
\begin{equation} \label{nonlinearproblem}
 F(x) = y,
\end{equation}
where $F: \mathcal{D}(F) \subset X \rightarrow Y$ is a nonlinear operator between Hilbert spaces $X$ and $Y$ and $\mathcal{D}(F)$ is its domain. Our goal is to translate the idea of sequential projections onto stripes to the context of nonlinear operators. For that purpose, we have to make sure that the solution set
\begin{equation}
 M_{F(x)=y} := \left\lbrace x \in X \ : \ F(x)=y \right\rbrace
\end{equation}
is included in any stripe onto which we project in an effort to approach a solution of \eqref{nonlinearproblem}. \\
A simple replacement of $A^*$ by the Fr{\' e}chet derivative $F'(x_n)^*$ at the current iterate $x_n$, as might be the first idea given that the current gradient plays an important role in SESOP methods, does generally not ensure that $M_{F(x)=y}$ is included in a hyperplane or stripe of the form $H\big(F'(x_n)^*w_{n,i},\alpha_i,\xi_i\big)$ as defined previously. The reason is 
obvious: a solution $x$ of \eqref{nonlinearproblem} is in general not mapped onto $y$ by $F'(\tilde{x})$ for some $\tilde{x} \in \mathcal{D}(F)$. Furthermore, the fact that the linearization $F'(x)$ depends on the position $x \in X$ shows that the local character of nonlinear problems will need to be taken into account when dealing with problems like \eqref{nonlinearproblem}. This is strongly reflected in the following assumptions on the properties of $F$.
\vspace*{2ex} \\
Let $F:\mathcal{D}(F)\subset X\to Y$ be continuous and Fr\'echet differentiable in a ball $B_{\rho}(x_0) \subset \mathcal{D}(F)$ centered about the initial value $x_0 \in \mathcal{D}(F)$ with radius $\rho > 0$ and let the Fr\'echet derivative $F' (\cdot)$ be continuous. We claim the existence of a solution $x^+ \in X$ of \eqref{nonlinearproblem} satisfying
\begin{equation} \label{existence_solution_in_ball}
 x^+ \in B_{\frac{\rho}{2}}(x_0).
\end{equation}
Furthermore, we postulate that $F$ fulfills the \emph{tangential cone condition}
\begin{equation} \label{tcc}
 \left\lVert F(x) - F(\tilde{x}) - F'(x)(x-\tilde{x}) \right\rVert \leq c_{\mathrm{tc}} \left\lVert F(x) - F(\tilde{x}) \right\rVert
\end{equation}
for a nonnegative constant $\ctc < 1$ and that there is a positive constant $c_F>0$ such that
\begin{equation} \label{frechet_bounded}
 \lVert F'(x) \rVert < c_F
\end{equation}
for all $x \in B_{\rho}(x_0)$. Also, we assume the operator $F$ to be weakly sequentially closed, i.~e.
\begin{equation}
 \left( x_n \rightharpoonup x \wedge F(x_n) \rightarrow y \right) \Rightarrow \left( x \in \mathcal{D}(F) \wedge F(x) = y \right).
\end{equation}
As before, in the case of perturbed data we assume a noise level $\delta$ and postulate
\begin{displaymath}
 \left\lVert y^{\delta} - y \right\rVert \leq \delta.
\end{displaymath}
The residual is again defined by
\begin{displaymath}
 R_n := F(x_n) - y
\end{displaymath}
for exact data and by
\begin{displaymath}
 R_n^{\delta} := F(x_n^{\delta}) - y^{\delta}
\end{displaymath}
for noisy data. For later convenience, we want to define the \emph{current gradient} 
\begin{equation}
 g_n^{\delta} := F'(x_n^{\delta})^*(F(x_n^{\delta})-y^{\delta})
\end{equation}
in iteration $n \in \NN$ as the gradient of the functional
\begin{displaymath}
 \frac{1}{2} \lVert F(x)-y^{\delta} \rVert^2
\end{displaymath}
evaluated at the current iterate $x_n^{\delta}$.
\vspace*{2ex} \\
The projection onto hyperplanes in the linear case for exact data is convenient as the solution set itself is a hyperplane in $X$, spanned by elements of the null space of $A$. When dealing with nonlinear problems, this is no longer true. Our approach is to consider stripes - similar to the ones in the RESESOP scheme for linear operators - already for unperturbed data, using the tangential cone condition.\\


\subsection{The case of exact data}

We formulate SESOP for nonlinear operators in case of exact data.\\[1ex]

\begin{definition}
 For $u \in X$, $\alpha \in \RR$ and $\xi \in \RR$ with $\xi > 0$ we define the \emph{stripe}
 \begin{equation}
  H(u,\alpha,\xi) := \big\lbrace x \in X \ : \ \big\lvert \langle u,x \rangle - \alpha \big\rvert \leq \xi \big\rbrace.
 \end{equation}
\end{definition}

\begin{algorithm} \label{algo4}
 Choose $x_0 \in X$ as an initial value. In step $n\in\NN$, choose a finite index set $I_n$ and define
 \begin{equation}
 H_{n,i} := H(u_{n,i},\alpha_{n,i},\xi_{n,i})
\end{equation}
 with
 \begin{equation} \label{choiceparametersn1}
  \begin{split}
   u_{n,i} &:= F'(x_i)^*w_{n,i}, \\
   \alpha_{n,i} &:= \langle F'(x_i)^*w_{n,i}, x_i \rangle - \langle w_{n,i}, F(x_i) - y \rangle, \\
   \xi_{n,i} &:= \ctc \lVert w_{n,i} \rVert \lVert R_i \rVert.
  \end{split}
 \end{equation}
 Calculate
 \begin{equation}
  x_{n+1} = x_n - \sum_{i \in I_n} t_{n,i} F'(x_i)^*w_{n,i},
 \end{equation}
 where $t_n:=(t_{n,i})_{i \in I_n}$ are chosen such that 
 \begin{equation}
  x_{n+1} \in \bigcap_{i \in I_n} H_{n,i},
 \end{equation}
 i.~e.~the new iterate is given by the projection
 \begin{displaymath}
  x_{n+1} = P_{\bigcap_{i \in I_n} H_{n,i}}(x_n) = \mathrm{argmin}_{x \in \bigcap_{i \in I_n} H_{n,i}} \lVert x_n - x \rVert.
 \end{displaymath}
\end{algorithm}

\begin{definition}
 We call
\begin{displaymath}
 U_n:= \left\lbrace u_{n,i} \ : \ i \in I_n \right\rbrace \subset X
\end{displaymath}
 the \emph{search space} at iteration $n \in \NN$.\\[1ex]
\end{definition}

Taking a closer look at the definition of the stripes, we see that we have replaced $A^*$ with the adjoint of the linearization of $F$ in the iterate $x_i$. The width of the stripe depends on the constant $\ctc$ from the cone condition \eqref{tcc}. The other alterations can be interpreted as a localization of the hyperplanes subject to the local properties of $F$ in a neighborhood of the initial value. This becomes clear when we write the stripe $H_{n,i}$ in the form
\begin{displaymath}
 H_{n,i} := \left\lbrace x \in X \ : \ \big\lvert \langle F'(x_i)^*w_{n,i}, x_i -x \rangle - \langle w_{n,i}, F(x_i) -y \rangle \big\rvert \leq \ctc \lVert w_{n,i} \rVert \lVert R_i \rVert  \right\rbrace,
\end{displaymath}
i.~e.~we have to work with distances $x_i-x$ and $F(x_i)-y$ to the current iterate, respectively the point of linearization, and the value of $F$ in $x_i$.\\[1ex]

\begin{proposition} \label{prop_solution_set_contained1}
 For any $n \in \NN$, $i \in I_n$, the solution set $M_{F(x)=y}$ fulfills
 \begin{displaymath}
  M_{F(x)=y} \subset H_{n,i},
 \end{displaymath}
 where $u_{n,i}$, $\alpha_{n,i}$ and $\xi_{n,i}$ are chosen as in \eqref{choiceparametersn1}.\\[1ex]
\end{proposition}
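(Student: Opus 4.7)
The plan is to fix an arbitrary $z \in M_{F(x)=y}$ and show directly that $\lvert\langle u_{n,i},z\rangle - \alpha_{n,i}\rvert \leq \xi_{n,i}$, by manipulating the inner product so that the tangential cone condition \eqref{tcc} can be applied.

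First I would substitute the definitions from \eqref{choiceparametersn1} and use the adjoint identity $\langle F'(x_i)^* w_{n,i}, v\rangle = \langle w_{n,i}, F'(x_i) v\rangle$ to rewrite
\begin{equation*}
\langle u_{n,i}, z\rangle - \alpha_{n,i}
= \langle w_{n,i}, F'(x_i)(z - x_i)\rangle + \langle w_{n,i}, F(x_i) - y\rangle.
\end{equation*}
Since $z \in M_{F(x)=y}$ means $F(z) = y$, the second summand equals $\langle w_{n,i}, F(x_i) - F(z)\rangle$, and collecting everything under one inner product yields
\begin{equation*}
\langle u_{n,i}, z\rangle - \alpha_{n,i}
= \big\langle w_{n,i},\, F(x_i) - F(z) - F'(x_i)(x_i - z)\big\rangle.
\end{equation*}

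At this point the structure matches the left-hand side of \eqref{tcc} exactly (with $x=x_i$, $\tilde{x}=z$). By Cauchy--Schwarz and the tangential cone condition,
\begin{equation*}
\lvert\langle u_{n,i}, z\rangle - \alpha_{n,i}\rvert
\leq \lVert w_{n,i}\rVert \cdot \lVert F(x_i) - F(z) - F'(x_i)(x_i - z)\rVert
\leq \ctc \lVert w_{n,i}\rVert \lVert F(x_i) - F(z)\rVert.
\end{equation*}
Using $F(z)=y$ so that $\lVert F(x_i)-F(z)\rVert = \lVert R_i\rVert$, the right-hand side equals $\xi_{n,i}$, so $z \in H_{n,i}$ and the inclusion follows.

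I do not foresee a real obstacle; the proof is essentially an algebraic rearrangement designed precisely so that the tangential cone condition discharges the remaining error. The only point worth stating carefully is that this is exactly why the stripe width was defined as $\xi_{n,i}=\ctc\lVert w_{n,i}\rVert\lVert R_i\rVert$ rather than anything smaller: the linearization error between the current iterate $x_i$ and any exact solution $z$ is bounded, uniformly in $z$, by $\ctc\lVert R_i\rVert$ times the test element $w_{n,i}$, and no finer bound is available without further structural assumptions on $F$.
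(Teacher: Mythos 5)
Your proof is correct and follows essentially the same route as the paper: rewrite $\langle u_{n,i},z\rangle-\alpha_{n,i}$ via the adjoint identity, use $F(z)=y$ to collect the terms into $\langle w_{n,i},\,F(x_i)-F(z)-F'(x_i)(x_i-z)\rangle$, and bound by Cauchy--Schwarz together with the tangential cone condition \eqref{tcc}. No gaps; your closing remark about why $\xi_{n,i}$ is exactly the right stripe width is a nice addition not spelled out in the paper.
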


\begin{proof}
 Let $z \in M_{F(x)=y}$. We then have
 \begin{displaymath}
  \langle u_{n,i},z \rangle - \alpha_{n,i} = \langle w_{n,i}, F'(x_i)^*(z-x_i) + F(x_i) - y \rangle.
 \end{displaymath}
 With $F(z)=y$ we obtain
 \begin{align*}
  \big\lvert \langle w_{n,i}, F'(x_i)(z-x_i) -F(x_i) + F(z) \rangle \big\rvert &\leq \lVert w_{n,i} \rVert \cdot \lVert F(x_i) - F(z) - F'(x_i)(x_i - z)\rVert \\
     &\leq \ctc \lVert w_{n,i} \rVert \cdot \lVert F(x_i)-y \rVert
 \end{align*}
 and using $R_i=F(x_i)-y$ we have $z \in H_{n,i}$.
\end{proof}




\subsection{The case of noisy data}
Of course we want to extend our method to the case of noisy data. To this end, we again have to modify the stripes onto which we project, now taking into account the noise level. The following definition of stripes $H_{n,i}^{\delta}$ ensures that the solution set is contained in each stripe.\\[1ex]

\begin{definition} \label{definition_parameters_algo5}
 For $n \in \NN$ and $i \in I_n$ we define the stripes
 \begin{displaymath}
  H_{n,i}^{\delta} := H(u_{n,i}^{\delta},\alpha_{n,i}^{\delta},\xi_{n,i}^{\delta})
 \end{displaymath}
 with
 \begin{equation}
  \begin{split}
   u_{n,i}^{\delta} &:= F'(x_i^{\delta})^*w_{n,i}^{\delta},  \\
   \alpha_{n,i}^{\delta} &:= \left\langle F'(x_i^{\delta})^*w_{n,i}^{\delta}, x_i^{\delta} \right\rangle - \left\langle w_{n,i}^{\delta}, F(x_i^{\delta})-y^{\delta} \right\rangle, \\
   \xi_{n,i}^{\delta} &:= \left( \delta + \ctc(\lVert R_i^{\delta} \rVert + \delta) \right) \lVert w_{k,i}^{\delta} \rVert.
  \end{split}
 \end{equation}
\end{definition}

We can easily see that $M_{F(x)=y}$ is contained in each stripe $H_{n,i}^{\delta}$. As in the proof of Proposition \ref{prop_solution_set_contained1} we obtain for any $z\in M_{F(x)=y}$
\begin{displaymath}
\begin{split}
 \big\lvert \left\langle u_{n,i}^{\delta}, z \right\rangle - \alpha_{n,i}^{\delta} \big\rvert &= \big\lvert \left\langle w_{n,i}^{\delta}, F'(x_i^{\delta})(z - x_i^{\delta}) + F(x_i^{\delta}) - F(z) + F(z) - y^{\delta} \right\rangle \big\rvert \\
   &\leq \left\lVert w_{n,i}^{\delta} \right\rVert \left( \left\lVert F(x_i^{\delta}) - F(z) - F'(x_i^{\delta})(x_i^{\delta}-z) \right\rVert + \left\lVert y^{\delta} - y \right\rVert \right) \\
   &\leq \left\lVert w_{n,i}^{\delta} \right\rVert \cdot \left(\ctc \lVert F(x_i^{\delta}) -y^{\delta} + y^{\delta}- F(z)  \rVert + \delta \right) \\
   &\leq \left\lVert w_{n,i}^{\delta} \right\rVert \cdot \left(\ctc \left(\lVert R_i^{\delta} \lVert + \delta \right) + \delta \right).
 \end{split}
\end{displaymath}

\begin{algorithm} \label{algo5}
 Let $x_0^{\delta}$ be an initial value and $\tau>1$ a constant. At iteration $n \in \NN$, choose a finite index set $I_n^{\delta}$ and search directions $F'(x_i^{\delta})^*w_{n,i}^{\delta}$ such that $n \in I_n$ and $F'(x_n^{\delta})^*w_{n,n}^{\delta} = F'(x_n^{\delta})^*(F(x_n^{\delta})- y^{\delta})$. If the residual $R_n^{\delta}=F(x_n^{\delta})-y^{\delta}$ satisfies the discrepancy principle
 \begin{equation}
  \lVert R_n^{\delta} \rVert \leq \tau\delta,
 \end{equation}
 stop the iteration. Otherwise compute
 \begin{equation}
  x_{n+1}^{\delta} = x_n^{\delta} - \sum_{i \in I_n^{\delta}} t_{n,i}^{\delta} F'(x_i^{\delta})^*w_{n,i}^{\delta},
 \end{equation}
 where $t_n^{\delta} = (t_{n,i}^{\delta})_{i\in I_n^{\delta}}$ is chosen such that
 \begin{equation}
  x_{n+1}^{\delta} \in \bigcap_{i \in I_n^{\delta}} H(u_{n,i}^{\delta},\alpha_{n,i}^{\delta},\xi_{n,i}^{\delta}),
 \end{equation}
 where $u_{n,i}^{\delta}$, $\alpha_{n,i}^{\delta}$ and $\xi_{n,i}^{\delta}$ are defined as in Definition \ref{definition_parameters_algo5}, and such that an estimate of the form
 \begin{equation}
  \lVert z-x_{n+1}^{\delta} \rVert^2 \leq \lVert z-x_n^{\delta} \rVert - C \lVert R_n^{\delta} \rVert^2
 \end{equation}
 holds for all $z \in M_{F(x)=y}$ for some constant $C>0$.\\[1ex]
\end{algorithm}

In the following subsection, we want to introduce a special case of the above algorithm, where the choice of search directions not only provides a fast regularized solution of a nonlinear problem as in \eqref{nonlinearproblem}, but also admits a good understanding of the structure of the method.\\

\subsection{An algorithm with two search directions} \label{subsection_nsesop2}
In analogy to Algorithm \ref{algo3} we want to develop a fast method for nonlinear operator equations, where we use only two search directions in each iteration. For linear problems in Banach spaces, this method provides a fast algorithm to calculate a regularized solution of \eqref{linearproblem}, where the search space consists in every step of the gradients $g_{n}^{\delta}$ and $g_{n-1}^{\delta}$. In the first step, only the gradient $g_{n}^{\delta}$ is used, so that the first iteration is similar to a Landweber step. We will adapt this method for nonlinear inverse problems \eqref{nonlinearproblem} in Hilbert spaces and analyze the convergence of our proposed algorithm. Also we show that, together with the discrepancy principle as a stopping rule, we obtain a regularization method for the solution of \eqref{nonlinearproblem}.\\[1ex]

\begin{remark}
 When projecting an element $x \in X$ onto the intersection of two halfspaces (or stripes) $H_1$ and $H_2$ by first calculating $x_1 = P_{H_1}(x)$ and afterwards $x_2=P_{H_1\cap H_2}(x_1)$, we may have $x_2 \neq P_{H_1 \cap H_2}(x)$. This can occur if $x$ is contained in neither $H_1$ nor $H_2$. If $x$ is already contained in $H_1$ or $H_2$, we have equality. This has been illustrated in \cite{ss09} for Hilbert spaces. The reason is that the order of projection plays an important role. If we have for example $x\in H_1$, the order of projections is evident and yields the desired result. We want to exploit this fact for our algorithm.\\[1ex]
\end{remark}

\begin{algorithm} (RESESOP for Nonlinear Operators with Two Search Directions) \label{algo6}
 Select an initial value $x_0^{\delta}:=x_0$. For $n=0$ choose $u_0^{\delta}$ and, if $n\geq 1$, choose $\left\lbrace u_n^{\delta}, u_{n-1}^{\delta} \right\rbrace$ as search directions in Algorithm \ref{algo5} with
 \begin{equation}
 \begin{split}
  u_n^{\delta} &:= F'(x_n^{\delta})^*w_n^{\delta}, \\
  w_n^{\delta} &:= F(x_n^{\delta})-y^{\delta}.
 \end{split}
 \end{equation}
 Define $H_{-1}^{\delta}:=X$ and for $n\in\NN$ the stripe
 \begin{equation}
  H_n^{\delta} := H(u_n^{\delta},\alpha_n^{\delta},\xi_n^{\delta})
 \end{equation}
 with
 \begin{equation}
 \begin{split}
  \alpha_n^{\delta} &:= \left\langle u_n^{\delta}, x_n^{\delta} \right\rangle - \lVert R_n^{\delta} \rVert^2, \\
  \xi_n^{\delta} &:= \lVert R_n^{\delta} \rVert \big( \delta + \ctc(\lVert R_n^{\delta} \rVert +\delta) \big).
 \end{split}
 \end{equation}
 As a stopping rule choose the discrepancy principle, where
 \begin{equation} \label{choice_tau_algo6}
  \tau > \frac{1+\ctc}{1-\ctc}.
 \end{equation}
 As long as $\lVert R_n^{\delta} \rVert > \tau\delta$, we have
 \begin{equation} \label{situation_for_xn}
  x_n^{\delta} \in H_>(u_n^{\delta},\alpha_n^{\delta} + \xi_n^{\delta}) \cap H_{n-1}^{\delta},
 \end{equation}
 and thus calculate the new iterate $x_{n+1}^{\delta}$ according to the following two steps:
 \begin{itemize}
  \item[(i)] Calculate
  \begin{equation} 
  \begin{split}
   \tilde{x}_{n+1}^{\delta} &:= P_{H(u_n^{\delta},\alpha_n^{\delta}+\xi_n^{\delta})} (x_n^{\delta}) \\
                            &= x_n^{\delta} - \frac{\left\langle u_n^{\delta}, x_{n}^{\delta} \right\rangle-\left( \alpha_n^{\delta}+\xi_n^{\delta} \right)}{\left\lVert u_n^{\delta} \right\rVert^2} u_n^{\delta}.
  \end{split}
  \end{equation}
  Then an estimate of the form
  \begin{displaymath}
   \left\lVert z-\tilde{x}_{n+1}^{\delta} \right\rVert^2 \leq \left\lVert z-x_n^{\delta} \right\rVert^2 - \left( \frac{\lVert R_n^{\delta} \rVert\left( \lVert R_n^{\delta} \rVert - \delta - \ctc (\lVert R_n^{\delta} \rVert +\delta) \right)}{\left\lVert u_n^{\delta} \right\rVert} \right)^2
  \end{displaymath}
  is valid for all $z \in M_{F(x)=y}$. \\
  If we have $\tilde{x}_{n+1}^{\delta} \in H_{n-1}^{\delta}$, we have $\tilde{x}_{n+1}^{\delta}=P_{H_n^{\delta}\cap H_{n-1}^{\delta}}(x_n^{\delta})$ and we are done. Otherwise, go to step (ii): 
  \item[(ii)] First, decide whether $\tilde{x}_{n+1}^{\delta} \in H_>(u_{n-1}^{\delta}, \alpha_{n-1}^{\delta} + \xi_{n-1}^{\delta})$ or $\tilde{x}_{n+1}^{\delta} \in H_<(u_{n-1}^{\delta}, \alpha_{n-1}^{\delta} - \xi_{n-1}^{\delta})$. Calculate accordingly
  \begin{displaymath}
   x_{n+1}^{\delta} := P_{H(u_n^{\delta},\alpha_n^{\delta}+\xi_n^{\delta}) \cap H(u_{n-1}^{\delta}, \alpha_{n-1}^{\delta} \pm \xi_{n-1}^{\delta})}(\tilde{x}_{n+1}^{\delta}),
  \end{displaymath}
  i.~e.~determine $x_{n+1}^{\delta} = \tilde{x}_{n+1}^{\delta} - t_{n,n}^{\delta} u_n^{\delta} - t_{n,n-1}^{\delta}u_{n-1}^{\delta}$ such that $\left( t_{n,n}^{\delta},t_{n,n-1}^{\delta} \right)$ minimizes the function
  \begin{displaymath}
   h_2(t_1,t_2) := \frac{1}{2} \left\lVert \tilde{x}_{n+1}^{\delta} - t_1 u_n^{\delta} - t_2 u_{n-1}^{\delta} \right\rVert^2 + t_1 \left( \alpha_n^{\delta}+\xi_n^{\delta} \right) + t_2 \left( \alpha_{n-1}^{\delta} \pm \xi_{n-1}^{\delta} \right).
  \end{displaymath}
  Then we have $x_{n+1}^{\delta}=P_{H_n^{\delta} \cap H_{n-1}^{\delta}}(x_n^{\delta})$ and for all $z \in M_{F(x)=y}$ the descent property
  \begin{equation}
   \left\lVert z-x_{n+1}^{\delta} \right\rVert^2 \leq \left\lVert z-x_n^{\delta} \right\rVert^2 - S_n^{\delta}
  \end{equation}
  is fulfilled, where
  \begin{displaymath}
  \begin{split}
   S_n^{\delta} &:= \left( \frac{\lVert R_n^{\delta} \rVert\left( \lVert R_n^{\delta} \rVert - \delta - \ctc (\lVert R_n^{\delta} \rVert +\delta) \right)}{\left\lVert u_n^{\delta} \right\rVert} \right)^2 \\
   &\quad+ \left( \frac{\big\lvert \left\langle u_{n-1}^{\delta}, \tilde{x}_{n+1}^{\delta} \right\rangle - \left(\alpha_{n-1}^{\delta} \pm \xi_{n-1}^{\delta}\right) \big\rvert}{\gamma_n \left\lVert u_{n-1}^{\delta} \right\rVert} \right)^2
  \end{split}
  \end{displaymath}
  and
  \begin{displaymath}
   \gamma_n := \left( 1-\left( \frac{\big\lvert \left\langle u_n^{\delta}, u_{n-1}^{\delta} \right\rangle \big\rvert}{\left\lVert u_n^{\delta} \right\rVert \cdot \left\lVert u_{n-1}^{\delta} \right\rVert} \right)^2 \right)^{\frac{1}{2}} \quad \in (0,1].\vspace{2mm}
  \end{displaymath}
 \end{itemize}
\end{algorithm}

The validity of the statements made in Algorithm \ref{algo6} are a consequence of Proposition \ref{prop_proj_onto_two}. By projecting first onto the stripe $H_n^{\delta}$, we make sure that a descent property holds to guarantee weak convergence.\\[1ex]

\begin{remark} \label{remark_two_dir}
\begin{itemize}
 \item[(a)] As we have mentioned before, calculating the projection of $x\in X$ onto the intersection of two halfspaces or stripes by first projecting onto one of them and then projecting onto the intersection does not necessarily lead to the correct result if $x$ is not contained in at least one of them. In our algorithm we avoid this problem: By choosing $\tau$ according to \eqref{choice_tau_algo6}, we guarantee that in iteration $n$ (provided the iteration has not been stopped yet) the iterate $x_n^{\delta}$ is an element of $H_>(u_n^{\delta},\alpha_n^{\delta} + \xi_n^{\delta}) \cap H_{n-1}^{\delta}$, which determines the order of projection that leads to the desired result. \\
 As $x_n^{\delta}$ is the projection of $x_{n-1}^{\delta}$ onto $H_{n-1}^{\delta} \cap H_{n-2}^{\delta}$, we have $x_n^{\delta} \in H_{n-1}^{\delta}$. To see that \eqref{situation_for_xn} is valid as long as $\lVert R_n^{\delta} \rVert >\tau\delta$, we note that from \eqref{choice_tau_algo6} we have 
 \begin{displaymath}
  \lVert R_n^{\delta} \rVert >\tau\delta > \delta \frac{1+\ctc}{1-\ctc}.
 \end{displaymath}
 With $0 \leq \ctc < 1$ we obtain
 \begin{displaymath}
   \lVert R_n^{\delta} \rVert - \ctc\lVert R_n^{\delta} \rVert -\delta\ctc -\delta  > 0,
 \end{displaymath}
 yielding
 \begin{displaymath}
  \alpha_n^{\delta} + \xi_n^{\delta} = \langle u_n^{\delta}, x_n^{\delta} \rangle - \lVert R_n^{\delta} \rVert \cdot \left( \lVert R_n^{\delta} \rVert - \ctc\lVert R_n^{\delta} \rVert -\delta\ctc -\delta \right) < \langle u_n^{\delta}, x_n^{\delta} \rangle.
 \end{displaymath}
 Thus $x_n^{\delta} \in H_>(u_n^{\delta}, \alpha_n^{\delta} + \xi_n^{\delta})$ and we obtain \eqref{situation_for_xn}.\\
 \item[(b)] The choice \eqref{choice_tau_algo6} for $\tau$ depends strongly on the constant $\ctc$ from the tangential cone condition. The smaller $\ctc$, the better the approximation of $F$ by its linearization. Of course $\ctc=0$ implies the linearity of $F$.\\
 \item[(c)] As already stated in \cite{skhk12}, the improvement due to Step (ii) might be significant, if the search directions $u_n^{\delta}$ and $u_{n-1}^{\delta}$ fulfill
 \begin{displaymath}
  \frac{\big\lvert \left\langle u_n^{\delta}, u_{n-1}^{\delta} \right\rangle \big\rvert}{\left\lVert u_n^{\delta} \right\rVert \cdot \left\lVert u_{n-1}^{\delta} \right\rVert} \approx 1,
 \end{displaymath}
 as in that case the coefficient $\gamma_n$ is quite small and therefore $S_n^{\delta}$ is large. This can be illustrated by looking at the situation where $u_n^{\delta}\bot u_{n-1}^{\delta}$: The projection of $x_n^{\delta}$ onto $H_n^{\delta}$ is already contained in $H_{n-1}^{\delta}$, such that Step (ii) will not lead to any further improvement.\\
 \item[(d)] Algorithm \ref{algo6} is very useful for implementation: First of all, the search direction $u_{n-1}^{\delta}$ has already been calculated for the precedent iteration and can be reused. Also, the residual $R_n^{\delta}$ was necessarily calculated to see if the discrepancy principle is fulfilled and can be reused as we have $w_n^{\delta}:=F(x_n^{\delta})-y^{\delta} = R_n^{\delta}$. So the only costly calculation is to determine $F'(x_n^{\delta})^*R_n^{\delta}$. In some applications, for example in parameter identification, this corresponds to an evaluation of a partial differential equation. The effort is thus comparable to Landweber type iteration, but the algorithm may be faster as discussed in the previous point.\\[1ex]
\end{itemize}
\end{remark}


\section{Convergence and regularization properties} \label{section_convergence_regularization}
Finally we want to analyze the methods presented in Section \ref{section_nonlinear}. Using the conditions we postulated at the beginning of Section \ref{section_nonlinear}, we will show convergence results for the SESOP respective RESESOP algorithms which we adapted to solving nonlinear inverse problems. For the method with two search directions, Algorithm \ref{algo6}, we will prove that it yields a regularized solution of the nonlinear problem with noisy data.\\[1ex]

\begin{proposition} \label{prop_descent_property1}
 If we choose the search direction $u_{n,n} = F'(x_n)^*w_{n,n}$ as the current gradient $g_n$, i.~e.~ $g_n=u_{n,n}$ with $w_{n,n}=F(x_n)-y$
 and exact data $y\in Y$, then we have
 \begin{displaymath}
  x_n \in H_>(u_{n,n}, \alpha_{n,n}+\xi_{n,n}),
 \end{displaymath}
 where $\alpha_{n,n}$ and $\xi_{n,n}$ are chosen as in Algorithm \ref{algo4}. By projecting $x_n$ first onto $H(u_{n,n}, \alpha_{n,n}, \xi_{n,n})$, we obtain
 \begin{equation} \label{descent_exact_general}
  \lVert z-x_{n+1} \rVert^2 \leq \lVert z-P_{H(u_{n,n}, \alpha_{n,n}, \xi_{n,n})}(x_n) \rVert^2 \leq \lVert z-x_n \rVert^2 - \frac{(1-\ctc)^2}{\lVert F'(x_n) \rVert^2} \lVert R_n \rVert^2
 \end{equation}
 for $z\in M_{F(x)=y}$.\\[1ex]
\end{proposition}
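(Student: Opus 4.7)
Substitute $w_{n,n}=F(x_n)-y=R_n$ into the definitions (\ref{choiceparametersn1}) to obtain
\[
 \alpha_{n,n}+\xi_{n,n} \;=\; \langle u_{n,n},x_n\rangle - \langle R_n,R_n\rangle + \ctc\lVert R_n\rVert^2 \;=\; \langle u_{n,n},x_n\rangle - (1-\ctc)\lVert R_n\rVert^2.
\]
Since $\ctc<1$ and $R_n\neq 0$ (otherwise the iteration may be terminated), this shows $\langle u_{n,n},x_n\rangle - (\alpha_{n,n}+\xi_{n,n}) = (1-\ctc)\lVert R_n\rVert^2 > 0$, which is precisely the statement $x_n\in H_>(u_{n,n},\alpha_{n,n}+\xi_{n,n})$.

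Next, I use Proposition \ref{proposition_properties_projection}\,(iv), which tells us that for $x_n$ lying above the stripe the projection onto $H(u_{n,n},\alpha_{n,n},\xi_{n,n})$ coincides with the projection onto the bounding hyperplane $H(u_{n,n},\alpha_{n,n}+\xi_{n,n})$. Using the orthogonal projection formula \eqref{orthogonal_projection}, the intermediate iterate $\tilde{x}_{n+1}:=P_{H(u_{n,n},\alpha_{n,n},\xi_{n,n})}(x_n)$ satisfies
\[
 \lVert\tilde{x}_{n+1}-x_n\rVert^2 \;=\; \left(\frac{\langle u_{n,n},x_n\rangle-(\alpha_{n,n}+\xi_{n,n})}{\lVert u_{n,n}\rVert}\right)^2 \;=\; \frac{(1-\ctc)^2\lVert R_n\rVert^4}{\lVert u_{n,n}\rVert^2}.
\]
Since $u_{n,n}=F'(x_n)^* R_n$, we bound $\lVert u_{n,n}\rVert\leq\lVert F'(x_n)\rVert\cdot\lVert R_n\rVert$, giving $\lVert\tilde{x}_{n+1}-x_n\rVert^2\geq(1-\ctc)^2\lVert R_n\rVert^2/\lVert F'(x_n)\rVert^2$.

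By Proposition \ref{prop_solution_set_contained1}, every $z\in M_{F(x)=y}$ lies in the stripe $H(u_{n,n},\alpha_{n,n},\xi_{n,n})$, so the descent property of the metric projection onto a convex set applied at $x_n$ yields
\[
 \lVert z-\tilde{x}_{n+1}\rVert^2 \;\leq\; \lVert z-x_n\rVert^2 - \lVert\tilde{x}_{n+1}-x_n\rVert^2 \;\leq\; \lVert z-x_n\rVert^2 - \frac{(1-\ctc)^2}{\lVert F'(x_n)\rVert^2}\lVert R_n\rVert^2,
\]
which is the right-hand inequality in \eqref{descent_exact_general}. For the remaining inequality $\lVert z-x_{n+1}\rVert^2\leq\lVert z-\tilde{x}_{n+1}\rVert^2$, I exploit that $\bigcap_{i\in I_n}H_{n,i}\subseteq H_{n,n}$ and $z$ lies in the intersection (again by Proposition \ref{prop_solution_set_contained1}). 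Applying the descent property of $P_{\bigcap_i H_{n,i}}$ at $x_n$ and using the monotonicity $\lVert x_{n+1}-x_n\rVert\geq\lVert\tilde{x}_{n+1}-x_n\rVert$ that results from projecting onto a smaller convex set, one obtains the desired chain.

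\textbf{Main obstacle.} No part of the argument is technically demanding; the only place one has to be careful is justifying the left inequality of \eqref{descent_exact_general}, which requires noting both that the intersection of stripes sits inside the single stripe $H_{n,n}$ and that $z$ lies in both sets — otherwise the two descent estimates cannot be chained. Once this inclusion is in place, the rest is a bookkeeping exercise combining Proposition \ref{proposition_properties_projection}, Proposition \ref{prop_solution_set_contained1}, the operator-norm bound on $F'(x_n)^*$, and the elementary identity for $\alpha_{n,n}+\xi_{n,n}$.
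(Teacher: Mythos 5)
Your overall route coincides with the paper's (very terse) proof: substitute $w_{n,n}=R_n$ to get $\alpha_{n,n}=\langle u_{n,n},x_n\rangle-\lVert R_n\rVert^2$ and $\xi_{n,n}=\ctc\lVert R_n\rVert^2$, conclude $x_n\in H_>(u_{n,n},\alpha_{n,n}+\xi_{n,n})$ from $\ctc<1$, and derive the quantitative descent from the orthogonal projection formula \eqref{orthogonal_projection}, the bound $\lVert u_{n,n}\rVert\le\lVert F'(x_n)\rVert\,\lVert R_n\rVert$, and the descent property of the metric projection combined with $M_{F(x)=y}\subset H_{n,n}$ from Proposition \ref{prop_solution_set_contained1}. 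All of that is correct and is exactly what the paper's proof sketches.

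The one step that does not work as written is your justification of the first inequality $\lVert z-x_{n+1}\rVert^2\le\lVert z-\tilde{x}_{n+1}\rVert^2$. You derive $\lVert z-x_{n+1}\rVert^2\le\lVert z-x_n\rVert^2-\lVert x_{n+1}-x_n\rVert^2\le\lVert z-x_n\rVert^2-\lVert\tilde{x}_{n+1}-x_n\rVert^2$ and, separately, $\lVert z-\tilde{x}_{n+1}\rVert^2\le\lVert z-x_n\rVert^2-\lVert\tilde{x}_{n+1}-x_n\rVert^2$; these two estimates share a common upper bound but cannot be chained into $\lVert z-x_{n+1}\rVert^2\le\lVert z-\tilde{x}_{n+1}\rVert^2$. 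Indeed, for nested closed convex sets $C\subset D$ and $z\in C$ the inequality $\lVert z-P_C(x)\rVert\le\lVert z-P_D(x)\rVert$ fails in general: take $D=\lbrace (a,b)\in\RR^2 : b\le 0\rbrace$, $C=D\cap\lbrace -a+2b\le-1\rbrace$, $x=(0,2)$, $z=(0,-1)\in C$; then $P_D(x)=(0,0)$, $P_C(x)=(1,0)$ and $\lVert z-P_C(x)\rVert=\sqrt{2}>1=\lVert z-P_D(x)\rVert$. The intended reading of the proposition --- signalled by ``by projecting $x_n$ \emph{first} onto $H(u_{n,n},\alpha_{n,n},\xi_{n,n})$'' --- is that $x_{n+1}$ is produced sequentially, i.e.\ $x_{n+1}=P_{\bigcap_{i}H_{n,i}}(\tilde{x}_{n+1})$ with $\tilde{x}_{n+1}=P_{H_{n,n}}(x_n)$; then the first inequality is immediate from the descent property of the metric projection applied at $\tilde{x}_{n+1}$, since $z$ lies in the intersection. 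Note that your argument does correctly establish the outer estimate $\lVert z-x_{n+1}\rVert^2\le\lVert z-x_n\rVert^2-(1-\ctc)^2\lVert R_n\rVert^2/\lVert F'(x_n)\rVert^2$ even for the direct projection onto the intersection, so the substance of the proposition survives; it is only the middle link of the chain that needs the sequential interpretation rather than your nested-sets argument.
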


\begin{proof}
 According to our choice of $w_{n,n}=F(x_n)-y=R_n$ we have
 \begin{displaymath}
  \begin{split}
   \alpha_{n,n} &= \langle u_{n,n},x_n \rangle - \lVert R_n \rVert^2, \\
   \xi_{n,n} &= \ctc \lVert R_n \rVert^2.
  \end{split}
 \end{displaymath}
 We thus have $\langle u_{n,n}, x_n \rangle - \alpha_{n,n} = \lVert R_n \rVert^2 > \xi_{n,n}$ as $0 < \ctc < 1$. The descent property is easily obtained with the help of \eqref{orthogonal_projection} and the estimate $\lVert F'(x_n)^*(F(x_n)-y) \rVert \leq \lVert F'(x_n) \rVert \cdot \lVert R_n \rVert$.\hfill
\end{proof}
\vspace{2mm}

\begin{proposition} \label{prop_weak_conv1}
 The sequence of iterates $\left\lbrace x_n \right\rbrace_{n\in\NN}$, generated by Algorithm \ref{algo4}, fulfills
 \begin{displaymath}
  x_n \in B_{\rho}(x_0)
 \end{displaymath}
 and has a subsequence $\left\lbrace x_{n_k} \right\rbrace_{k \in \NN}$ that converges weakly to a solution of \eqref{nonlinearproblem}, if we choose $w_{n,n}=F(x_n)-y$ for every iteration $n \in \NN$.\\[1ex]
\end{proposition}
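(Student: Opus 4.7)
The plan is to proceed in four steps: first establish by induction that every iterate stays inside $B_\rho(x_0)$; then telescope the descent inequality to obtain summability of the residuals; then extract a weakly convergent subsequence; and finally identify its limit as a solution via the weak sequential closedness of $F$.

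For the containment, recall that $x^+ \in B_{\rho/2}(x_0)$ is a solution of \eqref{nonlinearproblem} and, by Proposition~\ref{prop_solution_set_contained1}, lies in every stripe $H_{n,i}$ used in Algorithm~\ref{algo4}. The hypothesis $w_{n,n} = F(x_n)-y$ places the current gradient $g_n = u_{n,n}$ among the search directions, so Proposition~\ref{prop_descent_property1} applies with $z = x^+$, provided $x_n \in B_\rho(x_0)$ (which guarantees that $F'(x_n)$ is defined and bounded by $c_F$). This yields
\begin{equation*}
 \|x^+ - x_{n+1}\|^2 \leq \|x^+ - x_n\|^2 - \frac{(1-\ctc)^2}{\|F'(x_n)\|^2}\|R_n\|^2 \leq \|x^+ - x_n\|^2.
\end{equation*}
Assuming inductively that $x_0,\dots,x_n \in B_\rho(x_0)$, iterating this bound gives $\|x^+ - x_{n+1}\| \leq \|x^+ - x_0\| < \rho/2$, and the triangle inequality then yields $\|x_{n+1} - x_0\| < \rho$, closing the induction.

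Since $\|F'(x_n)\| < c_F$ throughout the ball, telescoping the descent inequality over $n = 0,\dots,N$ produces
\begin{equation*}
 \frac{(1-\ctc)^2}{c_F^2}\sum_{n=0}^{N}\|R_n\|^2 \leq \|x^+ - x_0\|^2,
\end{equation*}
so letting $N\to\infty$ shows $\sum_{n\in\NN}\|R_n\|^2 < \infty$, and in particular $F(x_n) \to y$ in $Y$. The set $\{x_n\} \subset B_\rho(x_0)$ is bounded in the Hilbert space $X$, so by reflexivity a subsequence $x_{n_k}$ converges weakly to some $x^* \in X$. Combining $x_{n_k} \rightharpoonup x^*$ with $F(x_{n_k}) \to y$ and invoking the postulated weak sequential closedness of $F$ yields $x^* \in \mathcal{D}(F)$ and $F(x^*) = y$, i.e.\ $x^*$ solves \eqref{nonlinearproblem}.

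The main delicate point is that the descent estimate and the Fr\'echet derivative bound are both conditional on the iterate lying in $B_\rho(x_0)$, so one cannot first prove the descent globally and then deduce the inclusion; the two statements have to be carried along simultaneously by a single induction. Everything else — square-summability of residuals from monotone descent, boundedness giving a weakly convergent subsequence, and identification of the limit through weak closedness — is the standard Landweber-type pattern.
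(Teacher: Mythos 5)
Your proposal is correct and follows essentially the same route as the paper: induction on the descent inequality with the known solution in $B_{\rho/2}(x_0)$ to keep the iterates in $B_{\rho}(x_0)$, telescoping to get square-summability of the residuals, weak compactness of the bounded sequence, and weak sequential closedness of $F$ to identify the limit. The only (harmless, arguably cleaner) difference is that you telescope over the whole sequence using the known solution $x^+$ as reference point, whereas the paper telescopes over the extracted subsequence using the weak limit $\hat{x}$ as the point $z$ in the descent estimate.
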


\begin{proof}
 According to Equation \eqref{existence_solution_in_ball}, there is a solution $x \in B_{\frac{\rho}{2}}(x_0) \subset \mathcal{D}(F)$, such that $\lVert x - x_0 \rVert \leq \frac{\rho}{2}$. We use Proposition \ref{prop_descent_property1} and obtain $x_1 \in B_{\frac{\rho}{2}}(x_0)$ as
 \begin{displaymath}
  \lVert x - x_1 \rVert^2 \leq \lVert x - x_0 \rVert^2 \leq \frac{\rho^2}{4}. 
 \end{displaymath}
 Thus, by induction, the descent property \eqref{descent_exact_general} yields $x_n \in B_{\frac{\rho}{2}}(x)$. We conclude
 \begin{displaymath}
  \lVert x_n - x_0 \rVert \leq \lVert x_n - x \rVert + \lVert x - x_0 \rVert \leq \rho,
 \end{displaymath}
 so that $x_n \in B_{\rho}(x_0)$ for all $n\in\NN$ and the sequence $\left\lbrace \lVert x_n - x \rVert \right\rbrace_{n\in \NN}$ is bounded and monotonically decreasing. We thus have a weakly convergent subsequence $\left\lbrace x_{n_k} \right\rbrace_{k\in\NN}$ with a limit $\hat{x}:=\lim_{k\rightarrow \infty} x_{n_k}$. It remains to show that $\hat{x}\in M_{F(x)=y}$. For that purpose, we use again the descent property \eqref{descent_exact_general} and use the estimate \eqref{frechet_bounded} which is valid in $B_{\rho}(x_0)$,
 \begin{displaymath}
  \lVert \hat{x} - x_{n_k} \rVert^2 - \lVert \hat{x} - x_{n_{k+1}} \rVert^2 \geq \frac{(1-\ctc)^2}{\lVert F'(x_{n_k})\rVert^2} \lVert R_{n_k} \rVert^2 \geq \frac{(1-\ctc)^2}{c_F^2} \lVert R_{n_k} \rVert^2.
 \end{displaymath}
 Let $K \in \NN$ be an arbitrary index. The subsequence $\left\lbrace x_{n_k} \right\rbrace_{k\in\NN}$ fulfills
 \begin{displaymath}
 \begin{split}
  \sum_{k=0}^{K} \lVert R_{n_k} \rVert^2 &\leq \left(\frac{c_F}{1-\ctc}\right)^2 \sum_{k=0}^K \big(\lVert \hat{x} - x_{n_k} \rVert^2 - \lVert \hat{x} - x_{n_{k+1}} \rVert^2 \big) \\
                                         &= \left(\frac{c_F}{1-\ctc}\right)^2 \cdot \big(\lVert \hat{x} - x_{n_0} \rVert^2 - \lVert \hat{x} - x_{n_{K+1}} \rVert^2 \big) \\
                                         &\leq \left(\frac{c_F}{1-\ctc}\right)^2 \cdot \lVert \hat{x} - x_{n_0} \rVert^2  \quad < \infty.
 \end{split}
 \end{displaymath}
 This remains true for the limit $K\rightarrow\infty$, yielding the (absolute) convergence of the series $\sum_{k=0}^{\infty} \lVert R_{n_k} \rVert^2$. The sequence $\left\lbrace\lVert R_{n_k} \rVert\right\rbrace_{k\in\NN}$ has to be a null sequence, i.~e.~$\lVert F(x_{n_k})-y \rVert \rightarrow 0$ for $k\rightarrow\infty$. As $F$ is continuous and weakly sequentially closed, we have $F(\hat{x})=\lim_{k\rightarrow\infty}F(x_{n_k})=y$.\hfill
\end{proof}
\vspace{2mm}

\begin{theorem} \label{theorem1}
  Let $N \in \NN$ be a fixed integer and $g_n \in U_n$ for each $n\in \NN$.
  If $x^+ \in B_{\frac{\rho}{2}}(x_0)$ is the unique solution of \eqref{nonlinearproblem} in $B_{\rho}(x_0)$, the sequence of iterates $\lbrace x_n \rbrace_{n\in\NN}$ generated by Algorithm \ref{algo4} converges strongly to $x^+$, if we choose 
  \begin{displaymath}
    I_n \subset \left\lbrace n-N+1,...,n \right\rbrace
  \end{displaymath}
   and set
   \begin{displaymath}
    w_{n,i} := R_i = F(x_i)-y
   \end{displaymath}
   for each $i \in I_n$ and if the optimization parameters $t_{n,i}$ fulfill
  \begin{equation}\label{tni-bound}
   \lvert t_{n,i} \rvert \leq t
  \end{equation}
  for some $t > 0$.\\[1ex]
\end{theorem}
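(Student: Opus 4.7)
My plan is to combine Proposition~\ref{prop_weak_conv1} with a Cauchy argument in the spirit of the classical nonlinear Landweber analysis of Hanke--Neubauer--Scherzer, carefully adapted to the multi-direction setting. The assumption $g_n \in U_n$ together with Proposition~\ref{prop_descent_property1} yields, for every $n$ and every $z \in M_{F(x)=y}$, the descent property
\begin{displaymath}
 \lVert z - x_{n+1}\rVert^2 \leq \lVert z - x_n\rVert^2 - \frac{(1-\ctc)^2}{c_F^2}\lVert R_n\rVert^2.
\end{displaymath}
Applied with $z = x^+$ and telescoped, this gives $\sum_n \lVert R_n\rVert^2 < \infty$, so $\lVert R_n\rVert \to 0$, and moreover $\lVert x_n - x^+\rVert$ is non-increasing, hence convergent. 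The iterates remain in $B_\rho(x_0)$; by weak sequential closedness of $F$ every weak cluster point solves $F(x)=y$ in $B_\rho(x_0)$, and uniqueness of $x^+$ there forces the full sequence to converge weakly to $x^+$.

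To upgrade to strong convergence, I will show that $\{x_n\}$ is Cauchy. For $n\geq m$ I choose the intermediate index $l = l(m,n) \in \{m,\dots,n\}$ realizing $\lVert R_l\rVert = \min_{m\leq k \leq n}\lVert R_k\rVert$, which (minimum $\leq$ average) satisfies $\lVert R_l\rVert^2 (n-m+1) \leq \sum_{k=m}^n \lVert R_k\rVert^2$. Writing $e_k := x_k - x^+$, a direct computation gives the polarization identities
\begin{displaymath}
 \lVert x_n - x_l\rVert^2 = (\lVert e_n\rVert^2 - \lVert e_l\rVert^2) + 2\langle e_l, x_l - x_n\rangle, \quad \lVert x_l - x_m\rVert^2 = (\lVert e_m\rVert^2 - \lVert e_l\rVert^2) + 2\langle e_l, x_l - x_m\rangle.
\end{displaymath}
Using the monotonicity $\lVert e_n\rVert \leq \lVert e_l\rVert \leq \lVert e_m\rVert$, the first identity gives $\lVert x_n - x_l\rVert^2 \leq 2\lvert\langle e_l, x_l - x_n\rangle\rvert$ directly, while the second yields $\lVert x_l - x_m\rVert^2 \leq (\lVert e_m\rVert^2 - \lVert e_l\rVert^2) + 2\lvert\langle e_l, x_l - x_m\rangle\rvert$, whose first summand vanishes because $\{\lVert e_k\rVert^2\}$ is Cauchy. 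The decisive point is that both surviving inner products feature $e_l$ (not $e_m$ or $e_n$), so I can exploit the minimality of $\lVert R_l\rVert$.

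For each inner product I expand $x_l - x_\bullet$ as a telescoping sum of iteration updates, invoke $\lvert t_{k,i}\rvert \leq t$, and apply the tangential cone condition \eqref{tcc} twice to obtain $F'(x_i)(x_l - x^+) = R_l + \eta_{i,l}$ with $\lVert \eta_{i,l}\rVert \leq \ctc(2\lVert R_i\rVert + \lVert R_l\rVert)$, which yields the pointwise bound
\begin{displaymath}
 \lvert\langle R_i,\, F'(x_i)(x_l - x^+)\rangle\rvert \leq (1+\ctc)\lVert R_i\rVert \lVert R_l\rVert + 2\ctc \lVert R_i\rVert^2.
\end{displaymath}
Since $I_k \subset \{k-N+1,\dots,k\}$, each index $i$ appears in at most $N$ of the sets $I_k$, so the double sum over $(k,i)$ collapses into a single sum with a factor $tN$. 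The quadratic contribution is a tail of $\sum_n\lVert R_n\rVert^2$ and hence vanishes as $m\to\infty$. The main obstacle is the cross term $\lVert R_l\rVert \sum_i \lVert R_i\rVert$, since $\sum_i\lVert R_i\rVert$ need not converge; this is precisely where the minimizing choice of $l$ pays off. Cauchy--Schwarz bounds this cross term by $\lVert R_l\rVert\sqrt{(n-m+O(N))}\cdot\sqrt{\sum_i \lVert R_i\rVert^2}$, and the minimality inequality $\lVert R_l\rVert^2(n-m+1) \leq \sum_{k=m}^n \lVert R_k\rVert^2$ controls the first factor by a constant times a tail of $\sum_n \lVert R_n\rVert^2$, uniformly in $n$. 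All pieces therefore tend to zero, so $\{x_n\}$ is Cauchy, and its strong limit must coincide with the already identified weak limit $x^+$.
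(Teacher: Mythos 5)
Your proposal is correct and follows essentially the same route as the paper: weak convergence via the descent property, then a Cauchy argument \`a la Hanke--Neubauer--Scherzer using the intermediate index $l$ of minimal residual, the same polarization identities, the telescoping expansion of the updates, and the tangential cone condition. The only (harmless) deviation is in the final estimate, where the paper uses the minimality $\lVert R_l\rVert \leq \lVert R_i\rVert$ pointwise to bound everything directly by $(3\ctc+1)\lVert R_i\rVert^2$, whereas you retain the cross term $\lVert R_i\rVert\,\lVert R_l\rVert$ and dispose of it with Cauchy--Schwarz plus the minimum-versus-mean inequality --- a slightly longer path to the same tail-of-a-convergent-series conclusion.
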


\begin{proof}
 Inspired by the proof of Theorem 2.3 from \cite{hns_cl} we will show that the sequence $\lbrace x_n \rbrace_{n\in\NN}$ is a Cauchy sequence. For that purpose, we define the sequence $\left\lbrace a_n \right\rbrace_{n\in\NN}$ where
 \begin{displaymath}
  a_n := x_n - x^+
 \end{displaymath}
 and show that $\left\lbrace a_n \right\rbrace_{n\in\NN}$ is a Cauchy sequence. \\
 As $F'(x_n)^*w_{n,n} = g_n$ in Algorithm \ref{algo6}, we can apply Proposition \ref{prop_weak_conv1}. We have seen in the respective proof that $\left\lbrace \lVert a_n \rVert \right\rbrace_{n\in\NN}$ is a bounded monotonically decreasing sequence. So we have
 \begin{displaymath}
  \lVert a_n \rVert \rightarrow \epsilon \quad \text{for} \ n \rightarrow \infty,
 \end{displaymath}
 for some $\epsilon\geq 0$. Let $j\geq n$ and choose the index $l$ with $n \leq l \leq j$ such that
 \begin{displaymath}
 \lVert R_l \rVert = \lVert F(x_l) - y \rVert \leq \lVert F(x_i) - y \rVert = \lVert R_i \rVert \quad \text{for all} \ n \leq i \leq j.  
 \end{displaymath}
 We have $\lVert a_j - a_n \rVert \leq \lVert a_j - a_l \rVert + \lVert a_l - a_n \rVert$ and
 \begin{align*}
  \lVert a_j - a_l \rVert^2 &= 2\left\langle a_l-a_j, a_l \right\rangle + \lVert a_j \rVert^2 - \lVert a_l \rVert^2, \\
  \lVert a_l - a_n \rVert^2 &= 2\left\langle a_l-a_n, a_l \right\rangle + \lVert a_n \rVert^2 - \lVert a_l \rVert^2.
 \end{align*}
 When $n \rightarrow \infty$, we have $\lVert a_j \rVert^2 \rightarrow \epsilon$, $\lVert a_l \rVert^2 \rightarrow \epsilon$ and $\lVert a_n \rVert^2 \rightarrow \epsilon$. To prove $\lVert a_j - a_l \rVert \rightarrow 0$ for $n \rightarrow \infty$, we have to show that $2\left\langle a_l-a_j, a_l \right\rangle \rightarrow 0$ for $n\rightarrow \infty$. To this end, we note that
 \begin{displaymath}
 \begin{split}
  a_j - a_l &= x_j - x_l \\
            &= x_{j-1} - x_l - \sum_{i \in I_{j-1}} t_{j-1,i} F'(x_i)^*w_{j-1,i} \\
            &= - \sum_{k=l}^{j-1} \sum_{i \in I_k} t_{k,i} F'(x_i)^*w_{k,i}
 \end{split}
 \end{displaymath}
 and obtain
 \begin{displaymath}
  \begin{split}
   \left\lvert \left\langle a_l-a_j, a_l \right\rangle \right\rvert &=  \left\lvert \left\langle \sum_{k=l}^{j-1} \sum_{i \in I_k} t_{k,i} F'(x_i)^*w_{k,i} , a_l \right\rangle \right\rvert \\
    &\leq \sum_{k=l}^{j-1} \sum_{i \in I_k} \left\lvert t_{k,i} \right\rvert \left\lvert \left\langle F'(x_i)^*w_{k,i}, a_l \right\rangle \right\rvert \\
    &\leq \sum_{k=l}^{j-1} \sum_{i \in I_k} \left\lvert t_{k,i} \right\rvert \left\lVert w_{k,i} \right\rVert \left\lVert F'(x_i)(x_l-x^+) \right\rVert.
  \end{split}
 \end{displaymath}
 We estimate the last factor by
 \begin{displaymath}
  \begin{split}
   \left\lVert F'(x_i)(x_l-x^+) \right\rVert &\leq \left\lVert y - F(x_i) + F'(x_i)(x_i-x^+) \right\rVert \\
                                             &\quad  +  \left\lVert F(x_i) -F(x_l) - F'(x_i)(x_i -x_l) \right\rVert + \left\lVert F(x_l) - y \right\rVert \\
                                             &\leq \ctc \left\lVert F(x_i) - y \right\rVert + \ctc \left\lVert F(x_i) - F(x_l) \right\rVert + \left\lVert F(x_l) - y \right\rVert \\
                                             &\leq 2\ctc \left\lVert F(x_i) - y \right\rVert + (1+\ctc) \left\lVert F(x_l) - y \right\rVert \\
                                             &\leq (3\ctc+1) \left\lVert F(x_i) - y \right\rVert.
  \end{split}
 \end{displaymath}
 The choice $w_{k,i} = F(x_i) - y$ yields
 \begin{displaymath}
 \begin{split}
  \left\lvert \left\langle a_l-a_j, a_l \right\rangle \right\rvert &\leq (3\ctc + 1)\sum_{k=l}^{j-1} \sum_{i\in I_k} \left\lvert t_{k,i} \right\rvert \left\lVert F(x_i) -y \right\rVert^2 \\
   &\leq (3\ctc + 1)t\sum_{k=l}^{j-1} \sum_{i\in I_k}  \left\lVert F(x_i) -y \right\rVert^2.
 \end{split}
 \end{displaymath}
 Note that due to $I_n \subset \left\lbrace n-N+1,...,n \right\rbrace$ for a fixed $N\in \NN$, the sum over $i\in I_k$ is a finite sum and a similar calculation as in the proof of Proposition \ref{prop_weak_conv1} shows that the right-hand side of the above equation tends to 0 for $n \rightarrow \infty$. Consequently, the sequence $\left\lbrace \lVert a_n \rVert \right\rbrace_{n\in\NN}$ is a Cauchy sequence and the same holds for the sequence $\left\lbrace x_n \right\rbrace_{n\in\NN}$, which converges due to the weak sequential closedness of $F$ to the solution $x^+ \in B_{\rho}(x_0)$ as the sequence of residuals tends to 0 for $n\rightarrow \infty$.\hfill
\end{proof}
\vspace{2mm}

\begin{remark}
We obtain the strong convergence of the sequence from Proposition \ref{prop_weak_conv1}, if $X$ or $Y$ are finite-dimensional: As each subsequence of $\lbrace x_n \rbrace_{n\in\NN}$ is bounded, it contains a weakly convergent subsequence which, in a finite dimensional Hilbert space, is strongly convergent. Thus, the whole sequence converges strongly to a solution of \eqref{nonlinearproblem}, see also \cite{ss09} for more details.\\[1ex]
\end{remark}


We now want to deal with the sequences generated by Algorithms \ref{algo5} and \ref{algo6} for noisy data $y^{\delta}$.\\[1ex]

\begin{definition}
 For Algorithms \ref{algo5} and \ref{algo6} presented in Section \ref{section_nonlinear} we define the \emph{stopping index} 
 \begin{displaymath}
  n_*:=n_*(\delta) := \min \lbrace n\in \NN : \lVert R_n^{\delta} \rVert \leq \tau\delta \rbrace
 \end{displaymath}
 as the smallest iteration index, at which the discrepancy principle is fulfilled.\\[1ex]
\end{definition}

\begin{remark} \label{remark_tau_convergence}
 The following statements are valid, as long as the parameter $\tau$ used in the discrepancy principle is chosen such that 
 \begin{equation}
  \tau > \frac{1+\ctc}{1-\ctc}
 \end{equation}
 as postulated in Algorithm \ref{algo6} with two search directions. \\
 For simplicity, we define $x_n^{\delta}=x_{n_*}^{\delta}$ for all $n > n_*$.\\[1ex]
\end{remark}

\begin{lemma} \label{monotonicity_noisydata}
 If the gradient $g_n^{\delta}=F'(x_n^{\delta})^*(F(x_n^{\delta})-y^{\delta})$ of the functional $\frac{1}{2}\lVert F(x)-y^{\delta} \rVert^2$ in $x_n^{\delta}$ is included in die search space $U_n$ for $n\in \NN$ in Algorithm \ref{algo5}, the sequence $\left\lbrace \left\lVert z-x_n^{\delta} \right\rVert\right\rbrace_{n\in \NN}$ with $z \in B_{\rho}(x_0) \cap M_{F(x)=y}$ decreases monotonically.\\[1ex]
\end{lemma}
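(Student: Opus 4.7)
The plan is to reduce the claim to the descent property of the metric projection onto a closed convex set. The gradient inclusion hypothesis will be used only to certify that $x_n^{\delta}$ actually lies strictly outside the convex intersection $\bigcap_{i \in I_n^{\delta}} H_{n,i}^{\delta}$ onto which Algorithm \ref{algo5} projects, so that the projection is well-defined as a genuine descent step; monotonicity then follows essentially for free.

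First, I would single out the index $i=n \in I_n^{\delta}$ for which $u_{n,n}^{\delta} = g_n^{\delta}$, i.e.\ $w_{n,n}^{\delta} = R_n^{\delta}$. Substituting into the formulas from Definition \ref{definition_parameters_algo5} yields $\langle u_{n,n}^{\delta}, x_n^{\delta} \rangle - \alpha_{n,n}^{\delta} = \lVert R_n^{\delta} \rVert^2$ and $\xi_{n,n}^{\delta} = \lVert R_n^{\delta} \rVert \bigl(\delta + \ctc(\lVert R_n^{\delta} \rVert + \delta)\bigr)$. For every iteration index $n < n_*$ the discrepancy principle has not yet triggered, so $\lVert R_n^{\delta} \rVert > \tau\delta$, and the choice $\tau > (1+\ctc)/(1-\ctc)$ (see Remark \ref{remark_tau_convergence}) rearranges to $\lVert R_n^{\delta} \rVert^2 > \xi_{n,n}^{\delta}$. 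Hence $x_n^{\delta}$ lies strictly on the upper side of the stripe $H_{n,n}^{\delta}$ and in particular strictly outside $\bigcap_{i} H_{n,i}^{\delta}$.

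Next, the short calculation immediately preceding Algorithm \ref{algo5}, which invokes the tangential cone condition \eqref{tcc} and is therefore legitimate for $z \in B_{\rho}(x_0)$, gives $M_{F(x)=y} \cap B_{\rho}(x_0) \subset H_{n,i}^{\delta}$ for every $i \in I_n^{\delta}$. Thus $z$ belongs to the nonempty closed convex intersection $H_n^{\delta} := \bigcap_{i \in I_n^{\delta}} H_{n,i}^{\delta}$. Selecting $x_{n+1}^{\delta}$ as the metric projection of $x_n^{\delta}$ onto $H_n^{\delta}$---an admissible choice in Algorithm \ref{algo5}---the descent property of the metric projection onto a closed convex set in Hilbert space gives
\[
 \lVert z - x_{n+1}^{\delta} \rVert^2 \leq \lVert z - x_n^{\delta} \rVert^2 - \lVert x_n^{\delta} - x_{n+1}^{\delta} \rVert^2 \leq \lVert z - x_n^{\delta} \rVert^2,
\]
proving monotonicity for $n < n_*$. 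For $n \geq n_*$ the convention $x_n^{\delta} = x_{n_*}^{\delta}$ from Remark \ref{remark_tau_convergence} makes the inequality trivial.

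The only nontrivial step is the first one, namely verifying that the gradient stripe properly separates $x_n^{\delta}$ from the solution set; this relies precisely on the choice of $\tau$ dictated by the discrepancy principle. Everything afterward is a routine invocation of the descent inequality for metric projections onto closed convex subsets of a Hilbert space, together with the fact (already established in the excerpt) that the solution set sits inside every stripe $H_{n,i}^{\delta}$.
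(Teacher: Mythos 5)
Your proof is correct for the statement as literally written, but it takes a genuinely more elementary route than the paper, and that difference matters downstream. The paper's proof is constructive and quantitative: it first projects $x_n^{\delta}$ onto the bounding hyperplane $H(u_n^{\delta},\alpha_n^{\delta}+\xi_n^{\delta})$ of the gradient stripe, computes the intermediate iterate $\tilde{x}_{n+1}^{\delta}$ explicitly via \eqref{orthogonal_projection}, and thereby obtains the explicit decrease
\begin{displaymath}
 \lVert z-x_{n+1}^{\delta} \rVert^2 \leq \lVert z-x_n^{\delta}\rVert^2 - \left( \frac{\lVert R_n^{\delta} \rVert\left( \lVert R_n^{\delta} \rVert - \delta - \ctc (\lVert R_n^{\delta} \rVert +\delta) \right)}{\lVert u_n^{\delta} \rVert} \right)^2,
\end{displaymath}
using $\lVert z - x_{n+1}^{\delta}\rVert \leq \lVert z - \tilde{x}_{n+1}^{\delta}\rVert$ for the subsequent projections. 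You instead invoke only the generic descent inequality $\lVert z - P_C(x)\rVert^2 \leq \lVert z - x\rVert^2 - \lVert P_C(x)-x\rVert^2$ for the projection onto the full intersection, which yields monotonicity ``for free'' but discards the size of the decrement. Your preliminary separation step (showing $\langle u_{n,n}^{\delta},x_n^{\delta}\rangle - \alpha_{n,n}^{\delta} = \lVert R_n^{\delta}\rVert^2 > \xi_{n,n}^{\delta}$ from $\tau > (1+\ctc)/(1-\ctc)$) is exactly the paper's computation from Remark \ref{remark_two_dir}(a) and is fine, but in your argument it only certifies that the projection is nontrivial; in the paper it is the source of the quantitative bound. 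The practical consequence is that the estimate \eqref{estimate_noisydata} is precisely what Theorem \ref{theorem2} cites to sum the residuals and derive a finite stopping index, so your version of the lemma, while true, would not support the next step of the paper without redoing the paper's calculation. Also note that your argument requires $x_{n+1}^{\delta}$ to be the exact metric projection onto the whole intersection, whereas the paper's sequential argument only needs $x_{n+1}^{\delta}$ to arise from $\tilde{x}_{n+1}^{\delta}$ by further projections onto convex sets containing $z$, which is what Algorithm \ref{algo6} actually does.
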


\begin{proof}
 As we have seen before, the choice of $\tau$ yields $x_n^{\delta} \in H_>(u_n^{\delta},\alpha_n^{\delta}+\xi^{\delta})$ for $n < n_*$ if we set $w_{n,n}^{\delta} := R_n^{\delta}$ in Definition \ref{definition_parameters_algo5} such that the current gradient is included in the search space. 
 By projecting $x_n^{\delta}$ first onto the stripe $H(u_n^{\delta}, \alpha_n^{\delta}, \xi_n^{\delta})$, we obtain
 \begin{displaymath}
  \tilde{x}_{n+1}^{\delta}:=P_{H_>(u_n^{\delta},\alpha_n^{\delta}+\xi^{\delta})}(x_n^{\delta}) = x_n^{\delta} - \frac{\left\langle u_n^{\delta}, x_n^{\delta} \right\rangle - \left( \alpha_n^{\delta} + \xi_n^{\delta} \right) }{\lVert u_n^{\delta} \rVert^2} u_n^{\delta},
 \end{displaymath}
 and with $\lVert z-x_{n+1}^{\delta} \rVert^2 \leq \lVert z-\tilde{x}_{n+1}^{\delta} \rVert^2$ the estimate
 \begin{equation} \label{estimate_noisydata}
  \lVert z-x_{n+1}^{\delta} \rVert^2 \leq \lVert z-x_n^{\delta}\rVert^2 - \left( \frac{\lVert R_n^{\delta} \rVert\left( \lVert R_n^{\delta} \rVert - \delta - \ctc (\lVert R_n^{\delta} \rVert +\delta) \right)}{\left\lVert u_n^{\delta} \right\rVert} \right)^2,
 \end{equation}
 which we have shown before in Subsection \ref{subsection_nsesop2}. This proves the monotonicity of the sequence $\left\{ \left\lVert z-x_n^{\delta} \right\rVert \right\}_{n\in \NN}$, which is constant for $n\geq n_*$.\hfill
\end{proof}
\vspace{2mm}

\begin{theorem} \label{theorem2}
 Provided the current gradient $g_n^{\delta}$ is contained in the search space $U_n$ and the parameter $\tau$ is chosen such that
 \begin{displaymath}
  \tau > \frac{1+\ctc}{1-\ctc},
 \end{displaymath}
 the discrepancy principle yields a finite stopping index $n_*=n_*(\delta)$.\\[1ex]
\end{theorem}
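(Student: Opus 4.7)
The plan is a proof by contradiction. Assume that the discrepancy principle is never met, i.e.\ $\lVert R_n^{\delta}\rVert > \tau\delta$ for every $n\in\NN$, and derive a contradiction with the monotone decrease established in Lemma~\ref{monotonicity_noisydata}.

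First, I would fix $z:=x^+\in B_{\rho/2}(x_0)\cap M_{F(x)=y}$, so $\lVert z-x_0\rVert \leq \rho/2$. Applying \eqref{estimate_noisydata} inductively (which holds because the current gradient is in $U_n$ by assumption), the sequence $\{\lVert z-x_n^{\delta}\rVert\}$ is monotonically decreasing, which yields $x_n^{\delta}\in B_{\rho}(x_0)$ for all $n$ by exactly the same triangle-inequality argument used in the proof of Proposition~\ref{prop_weak_conv1}. Consequently, the bound \eqref{frechet_bounded} applies at every iterate, giving
\begin{displaymath}
 \lVert u_n^{\delta}\rVert = \lVert F'(x_n^{\delta})^*R_n^{\delta}\rVert \leq c_F \lVert R_n^{\delta}\rVert.
\end{displaymath}

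Next, I would exploit the choice $\tau>(1+\ctc)/(1-\ctc)$ to obtain a uniform positive lower bound on the per-iteration decrement. Since $\lVert R_n^{\delta}\rVert > \tau\delta$, we have
\begin{displaymath}
 \lVert R_n^{\delta}\rVert - \delta - \ctc\bigl(\lVert R_n^{\delta}\rVert+\delta\bigr) = (1-\ctc)\lVert R_n^{\delta}\rVert - (1+\ctc)\delta > \bigl[(1-\ctc)\tau-(1+\ctc)\bigr]\delta =: \eta >0.
\end{displaymath}
Substituting into \eqref{estimate_noisydata} and using $\lVert u_n^{\delta}\rVert\le c_F\lVert R_n^{\delta}\rVert$,
\begin{displaymath}
 \lVert z-x_{n+1}^{\delta}\rVert^2 \leq \lVert z-x_n^{\delta}\rVert^2 - \left(\frac{(1-\ctc)\lVert R_n^{\delta}\rVert-(1+\ctc)\delta}{c_F}\right)^2 \leq \lVert z-x_n^{\delta}\rVert^2 - \frac{\eta^2}{c_F^2}.
\end{displaymath}

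Telescoping this inequality from $0$ to $n$ gives
\begin{displaymath}
 0 \leq \lVert z-x_{n+1}^{\delta}\rVert^2 \leq \lVert z-x_0\rVert^2 - (n+1)\frac{\eta^2}{c_F^2},
\end{displaymath}
which is impossible for $n$ sufficiently large. This contradicts the assumption $n_*=\infty$, so there must exist a finite $n_*(\delta)$ with $\lVert R_{n_*}^{\delta}\rVert\le\tau\delta$. The only subtle point—and the main thing to be careful about—is verifying that all iterates remain in $B_{\rho}(x_0)$ so that \eqref{frechet_bounded} can be invoked uniformly in $n$; this is handled precisely by the monotonicity coming from Lemma~\ref{monotonicity_noisydata} together with the standing assumption $x^+\in B_{\rho/2}(x_0)$.
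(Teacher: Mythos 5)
Your proof is correct and follows essentially the same route as the paper: argue by contradiction, invoke Lemma \ref{monotonicity_noisydata} and the descent estimate \eqref{estimate_noisydata}, bound $\lVert u_n^{\delta}\rVert\le c_F\lVert R_n^{\delta}\rVert$ using $x_n^{\delta}\in B_{\rho}(x_0)$, and telescope. The only (harmless) difference is in how the contradiction is closed: you extract a uniform per-step decrement $\eta^2/c_F^2>0$ and let the telescoped sum force $\lVert z-x_{n+1}^{\delta}\rVert^2<0$, whereas the paper deduces summability of $\lVert R_n^{\delta}\rVert^2$ and hence $\lVert R_n^{\delta}\rVert\to 0$, contradicting $\lVert R_n^{\delta}\rVert>\tau\delta$; both hinge on $\delta>0$ in exactly the same way.
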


\begin{proof}
 Let us assume that the discrepancy principle is not satisfied for any iteration index $n \in \NN$. We then have $\delta  < \frac{1}{\tau}\lVert R_n^{\delta} \rVert$ for all $n\in \NN$. In that case, Lemma \ref{monotonicity_noisydata} holds and equation \eqref{estimate_noisydata} yields
 \begin{displaymath}
 \begin{split}
  \lVert z-x_{n+1}^{\delta} \rVert^2 &\leq \lVert z-x_n^{\delta}\rVert^2 - \left( \frac{\lVert R_n^{\delta} \rVert\left( \lVert R_n^{\delta} \rVert - \delta - \ctc (\lVert R_n^{\delta} \rVert +\delta) \right)}{\left\lVert F'(x_n^{\delta}) \right\rVert\cdot \left\lVert R_n^{\delta} \right\rVert} \right)^2 \\
  &\leq \lVert z-x_n^{\delta}\rVert^2 - \left( \frac{ \lVert R_n^{\delta} \rVert - \delta - \ctc (\lVert R_n^{\delta} \rVert +\delta)}{\left\lVert F'(x_n^{\delta}) \right\rVert} \right)^2 \\
  &\leq \lVert z-x_n^{\delta}\rVert^2 - \left(\frac{1-\ctc-\frac{1}{\tau}(1+\ctc)}{c_F^2}\right)^2 \cdot \lVert R_n^{\delta} \rVert^2,
 \end{split}
 \end{displaymath}
 where we have
 \begin{displaymath}
  1-\ctc-\frac{1}{\tau}(1+\ctc) > 0
 \end{displaymath}
 according to our choice of $\tau$. By a calculation as in the proof of Proposition \ref{prop_weak_conv1} we find that $\lbrace\lVert R_n^{\delta} \rVert\rbrace_{n\in\NN}$ is a null sequence. This is a contradiction to our assumption $\lVert R_n^{\delta} \rVert>\tau\delta$ for all $n\in \NN$. Consequently, there must be a finite stopping index $n_*$ fulfilling the discrepancy principle.\hfill
\end{proof}
\vspace{2mm}

We now want to deal with the method proposed in Subsection \ref{subsection_nsesop2}, where we again make use of the current gradient, such that the previous statements apply to the sequence of iterates generated by Algorithm \ref{algo6}. In particular, we know that Algorithm \ref{algo6} terminates after a finite number of iterations. It remains to show that the final iterate $x_{n_*(\delta)}^{\delta}$ is a regularized solution of the nonlinear operator equation \eqref{nonlinearproblem}, when only perturbed data are given.\\[1ex]

\begin{theorem} \label{regularization}
 Algorithm \ref{algo6} together with the discrepancy principle yields a regularized solution $x_{n_*}^{\delta}$ of the nonlinear problem \eqref{nonlinearproblem}, when only noisy data $y^{\delta}$ are given, i.~e.~we have
 \begin{equation}
  x_{n_*(\delta)}^{\delta} \rightarrow x^+ \quad \mathrm{for} \ \delta \rightarrow 0,
 \end{equation}
 if there is only one solution $x^+\in B_{\rho}(x_0)$ and if $\lim_{\delta\rightarrow 0} \left\lvert t_{n,i}^{\delta} \right\rvert < t$ for all $n\in\NN$ and $i\in I_n$ for some $t>0$.\\[1ex]
\end{theorem}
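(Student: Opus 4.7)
The strategy is the classical Landweber-type regularization argument: fix an arbitrary sequence $\delta_k \to 0$, set $n_k := n_*(\delta_k)$, and distinguish the cases where $\{n_k\}$ stays bounded or diverges to infinity.

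The central preparatory result is \emph{continuous dependence of the iterates on the data}: for every fixed $n \in \NN$, the iterate $x_n^{\delta_k}$ produced by Algorithm \ref{algo6} converges strongly to the exact-data iterate $x_n$ of Algorithm \ref{algo4} as $\delta_k \to 0$. I would establish this by induction on $n$. Each step computes $u_n^\delta,\alpha_n^\delta,\xi_n^\delta$, which depend continuously on $(x_n^\delta,y^\delta)$ via the continuity of $F$ and $F'$, performs the explicit orthogonal projection of Step (i) (a continuous operation), and, if necessary, solves the two-variable strictly convex quadratic minimization $h_2$ in Step (ii), whose unique minimizer depends continuously on its coefficients. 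The hypothesis $\lim_{\delta \to 0} \left\lvert t_{n,i}^{\delta} \right\rvert < t$ keeps the optimization parameters bounded and guarantees that they pass to limit values satisfying the bound \eqref{tni-bound} required in Theorem \ref{theorem1}.

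\textbf{Case 1: $\{n_k\}$ bounded.} Passing to a subsequence we may assume $n_k \equiv n$. The continuity statement yields $x_n^{\delta_k} \to x_n$ and hence $F(x_n^{\delta_k}) \to F(x_n)$. The discrepancy principle gives $\lVert F(x_n^{\delta_k}) - y^{\delta_k} \rVert \leq \tau \delta_k \to 0$ and thus $F(x_n) = y$, so $x_n$ is a solution in $B_\rho(x_0)$ (via Proposition \ref{prop_weak_conv1}); uniqueness forces $x_n = x^+$ and therefore $x_{n_k}^{\delta_k} \to x^+$.

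\textbf{Case 2: $n_k \to \infty$.} I would invoke Theorem \ref{theorem1}, which provides $x_n \to x^+$ strongly for the exact-data iterates of Algorithm \ref{algo4} (the $\delta = 0$ specialization of Algorithm \ref{algo6} falls under its hypotheses with $N=2$ and $w_{n,i}=R_i$). Given $\varepsilon > 0$, pick $N$ with $\lVert x_N - x^+ \rVert < \varepsilon/2$. For $k$ large enough we have $n_k \geq N$, so $x_N^{\delta_k}$ is the genuine $N$-th iterate (not the constant extension past $n_*$), and continuity gives $\lVert x_N^{\delta_k} - x_N \rVert < \varepsilon/2$; combining, $\lVert x^+ - x_N^{\delta_k} \rVert < \varepsilon$. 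Lemma \ref{monotonicity_noisydata}, applied with $z = x^+ \in B_\rho(x_0) \cap M_{F(x)=y}$, then yields
\begin{equation*}
 \lVert x^+ - x_{n_k}^{\delta_k} \rVert \leq \lVert x^+ - x_N^{\delta_k} \rVert < \varepsilon,
\end{equation*}
whence $x_{n_k}^{\delta_k} \to x^+$.

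\textbf{Main obstacle.} The delicate step is the continuous-dependence induction. Algorithm \ref{algo6} branches according to whether $\tilde x_{n+1}^\delta$ lies in $H_{n-1}^\delta$ (terminating at Step (i)) or not, and in the latter case further on the sign of $\langle u_{n-1}^\delta, \tilde x_{n+1}^\delta \rangle - \alpha_{n-1}^\delta$. Continuity does not hold automatically across these branches; I would verify that at boundary configurations, where $\tilde x_{n+1}^\delta$ lies on $\partial H_{n-1}^\delta$, the two admissible branches produce the same output, so that the limit $\delta \to 0$ is well-defined. Once this is secured, Case 1 and Case 2 together give the asserted regularization property.
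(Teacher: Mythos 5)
Your proposal follows essentially the same route as the paper's proof: the same dichotomy between a bounded stopping-index sequence (handled by continuous dependence of the fixed-$n$ iterate on the data plus the discrepancy principle and uniqueness of $x^+$) and $n_*(\delta_j)\to\infty$ (handled by the strong convergence of the exact-data iterates from Theorem \ref{theorem1}, continuity at a fixed index $N$, and the monotonicity from Lemma \ref{monotonicity_noisydata}). The only notable difference is that you explicitly flag and propose to verify the branch-consistency issue in the continuous-dependence induction, a point the paper's proof simply asserts; this is a welcome refinement rather than a different argument.
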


\begin{proof}
  We have to show that 
 \begin{equation}
  \left\lVert F(x_{n_*(\delta)}^{\delta}) - y \right\rVert \rightarrow 0
 \end{equation}
 if the noise level $\delta$ tends to 0. For that purpose, define the null sequence $\left\lbrace \delta_j \right\rbrace_{j\in\NN} \subset \RR_{>0}$, such that 
 \begin{equation}
  \left\lVert y^{\delta_j} - y \right\rVert \leq \delta_j
 \end{equation}
 holds for all $j\in\NN$. For each noise level $\delta_j$, we define the corresponding stopping index $n_*(\delta_j)$, such that
 \begin{equation}
  \left\lVert R_{n_*(\delta_j)}^{\delta_j} \right\rVert \leq \tau\delta_j.
 \end{equation}
 Note that due to our previous results we already know $x_n^{\delta_j} \in B_{\rho}(x_0)$ for a fixed index $n \in \NN$ and all $j\in \NN$, such that the sequence $\left\lbrace x_{n_*(\delta_j)} \right\rbrace_{j\in\NN}$ is bounded and has a weakly convergent subsequence. 
 Consider the sequence $\left\lbrace n_*(\delta_j) \right\rbrace_{j\in\NN}$. Let us assume that $n\in\NN$ is a finite accumulation point of this sequence. As in the proof of Theorem 2.4 in \cite{hns_cl}, we assume without loss of generality that $n_*(\delta_j) = n$ for all $j\in\NN$. We thus have
 \begin{equation} \label{discrepancy_principle_proof}
  \left\lVert F(x_n^{\delta_j}) - y^{\delta_j} \right\rVert \leq \tau \delta_j
 \end{equation}
 for all $j \in \NN$. Since we have fixed the index $n$, the iterate $x_n^{\delta_j}$ depends continuously on $y^{\delta_j}$ yielding
 \begin{displaymath}
  \begin{split}
   x_n^{\delta_j} \rightarrow x_n, \quad F(x_n^{\delta_j}) \rightarrow F(x_n)
  \end{split}
 \end{displaymath}
 for $j \rightarrow \infty$. The discrepancy principle is fulfilled at iteration $n$, so we obtain $F(x_n)=y$ from Equation \eqref{discrepancy_principle_proof}. As $x_n \in B_{\rho}(x_0)$ and $x^+$ is the only solution of \ref{nonlinearproblem} in $B_{\rho}(x_0)$, we have 
 \begin{displaymath}
   \left\lVert F\left(x_{n_*(\delta_j)}^{\delta_j}\right) - y \right\rVert \leq \left\lVert F\left(x_{n_*(\delta_j)}^{\delta_j}\right) - y^{\delta_j} \right\rVert + \left\lVert y^{\delta_j} - y \right\rVert   \leq (\tau +1) \delta_j \ \rightarrow 0
 \end{displaymath}
 and
 \begin{displaymath}
  x_{n_*(\delta_j)}^{\delta_j} \rightarrow x^+
 \end{displaymath}
 for $j \rightarrow \infty$ due to the weak sequential closedness of $F$.\\
 Let us now assume $n_*(\delta_j) \rightarrow \infty$ for $j\rightarrow\infty$. From Theorem \ref{theorem1} we recall that the sequence $\left\lbrace \lVert x_n - x^+ \rVert \right\rbrace_{n\in\NN}$ is a bounded and monotonically decreasing Cauchy sequence converging to 0. Subsequently for every $\epsilon > 0$ there is an $N_0 \in \NN$ such that $\lVert x_{N_0} - x^+ \rVert < \frac{\epsilon}{2}$. Furthermore, there is a $j_0 \in \NN$ such that, due to the continuity of the norm, we have
 \begin{equation}
  \left\lvert \lVert x_{N_0}^{\delta_j} - x^+ \rVert^2 - \lVert x_{N_0} - x^+ \rVert^2 \right\rvert < \frac{\epsilon}{2}
 \end{equation}
 for all $j > j_0$ on the one hand and also, according to our assumption,
 \begin{displaymath}
  n_*(\delta_j) \geq N_0 \quad \text{for all} \ j\geq j_0.
 \end{displaymath}
We thus have
 \begin{displaymath}
  \lVert x_{n_*(\delta_j)}^{\delta_j} - x^+\rVert^2 \leq \lVert x_{N_0}^{\delta_j} - x^+\rVert^2 
  \leq \lVert x_{N_0} - x^+\rVert^2 + \left\lvert \lVert x_{N_0}^{\delta_j} - x^+ \rVert^2 - \lVert x_{N_0} - x^+ \rVert^2 \right\rvert < \epsilon
 \end{displaymath}
 for all $j > j_0$, what completes the proof.\hfill
\end{proof}
\vspace{2mm}

\begin{corollary}
 Theorem \ref{regularization} remains valid for Algorithm \ref{algo5}, as long as we choose in every iteration $g_n^{\delta} \in U_n^{\delta}$ and $u_{n,i}^{\delta} = F'(x_i^{\delta})^*R_i^{\delta} \in U_n^{\delta}$ for each $i \in I_n$, where $I_n \subset \lbrace n-N+1,...,n \rbrace$ for a fixed $N\in\NN$, i.~e.~we choose the search directions among the recent $N$ gradients.\\[1ex]
\end{corollary}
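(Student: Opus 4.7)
The plan is to observe that the proof of Theorem \ref{regularization} rests on three ingredients: monotone decay of $\|z - x_n^{\delta}\|$ for $z \in M_{F(x)=y}$, finiteness of the stopping index $n_*(\delta)$, and strong convergence of the noise-free sequence produced by the corresponding Algorithm to $x^+$. Under the hypotheses of the corollary each of these ingredients is already in place, so the strategy is to reuse the two-case dichotomy of Theorem \ref{regularization} almost verbatim.

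First I would verify monotonicity: the assumption $g_n^{\delta} \in U_n^{\delta}$ is precisely the hypothesis of Lemma \ref{monotonicity_noisydata}, hence $\{\|z - x_n^{\delta}\|\}_{n \in \NN}$ is monotonically decreasing for $z \in M_{F(x)=y} \cap B_{\rho}(x_0)$ and $x_n^{\delta} \in B_{\rho}(x_0)$ for all $n$. Next, since the decay estimate \eqref{estimate_noisydata} from Lemma \ref{monotonicity_noisydata} is the only tool needed in the proof of Theorem \ref{theorem2}, that theorem applies and $n_*(\delta)$ is finite as long as $\tau > (1+\ctc)/(1-\ctc)$. For the third ingredient, the choices $I_n \subset \{n-N+1,\dots,n\}$ and $w_{n,i} = R_i = F(x_i)-y$ together with $|t_{n,i}| \le t$ are precisely the hypotheses of Theorem \ref{theorem1}, so the noise-free iterates converge strongly to $x^+$. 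The bound $\lim_{\delta \to 0} |t_{n,i}^{\delta}| < t$ in the corollary ensures that the same bound carries over to the limiting parameters, so Theorem \ref{theorem1} indeed applies to the limiting algorithm.

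With these three ingredients in place, I would copy the case split in the proof of Theorem \ref{regularization}. Fix a null sequence $\delta_j \to 0$ and consider $\{n_*(\delta_j)\}_{j \in \NN}$. If this sequence has a finite accumulation point $n$, pass to a subsequence on which $n_*(\delta_j) = n$ is constant, invoke continuity of $x_n^{\delta}$ in $y^{\delta}$ (which holds because Algorithm \ref{algo5} at step $n$ amounts to solving a finite-dimensional strictly convex minimization problem whose data depend continuously on $y^{\delta}$), pass to the limit in $\|F(x_n^{\delta_j}) - y^{\delta_j}\| \le \tau \delta_j$ to get $F(x_n) = y$, and conclude $x_n = x^+$ by uniqueness in $B_{\rho}(x_0)$. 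If instead $n_*(\delta_j) \to \infty$, use Theorem \ref{theorem1} to pick $N_0$ with $\|x_{N_0} - x^+\| < \varepsilon/2$, then continuity of $x_{N_0}^{\delta}$ in $\delta$ and the monotonicity from Lemma \ref{monotonicity_noisydata} give $\|x_{n_*(\delta_j)}^{\delta_j} - x^+\|^2 \le \|x_{N_0}^{\delta_j} - x^+\|^2 < \varepsilon$ for all $j$ sufficiently large.

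The main obstacle I anticipate is justifying the continuity of $x_n^{\delta}$ with respect to $y^{\delta}$ for fixed $n$ in the multi-direction setting of Algorithm \ref{algo5}, since $I_n^{\delta}$ may contain more than two indices and the projection onto $\bigcap_{i \in I_n^{\delta}} H_{n,i}^{\delta}$ is not given by the closed-form two-step formula of Algorithm \ref{algo6}. The natural way around this is to argue inductively on $n$: assuming $x_k^{\delta}$ depends continuously on $y^{\delta}$ for $k \le n$, the data defining each stripe $H_{n,i}^{\delta}$ (that is, $u_{n,i}^{\delta}$, $\alpha_{n,i}^{\delta}$, $\xi_{n,i}^{\delta}$) inherits continuity from continuity of $F$ and $F'$, and the metric projection onto a finite intersection of closed convex stripes depends continuously on both the starting point and the defining data as long as the intersection has nonempty interior, which is guaranteed since $M_{F(x)=y}$ lies inside each stripe with nonzero slack whenever $\delta > 0$. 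Once this continuity is established, the argument from Theorem \ref{regularization} goes through unchanged.
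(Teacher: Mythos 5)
Your proposal is correct and follows exactly the route the paper intends: the corollary's hypotheses are chosen precisely so that Lemma \ref{monotonicity_noisydata}, Theorem \ref{theorem2} and Theorem \ref{theorem1} apply, after which the two-case argument of Theorem \ref{regularization} transfers verbatim (the paper itself gives no separate proof for this reason). Your additional inductive justification of the continuity of $x_n^{\delta}$ in $y^{\delta}$ for the multi-direction projection is in fact more careful than the paper, which simply asserts this continuity in the proof of Theorem \ref{regularization}.
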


\begin{remark}
 The choice $U_n:=\left\lbrace g_n \right\rbrace$ for all $n\in\NN$ yields a Landweber type method, where the current iterate is projected onto a stripe corresponding to the conventional Landweber direction, for which we can apply our results from above.\\[1ex]
\end{remark}

\section{Conclusions}
In this article, we presented an iterative method designed for the solution of nonlinear inverse problems. The main idea was to define (convex) sets, which contain the set of solutions, and successively project onto these subsets. For this purpose, we defined stripes for both the exact and the noisy case, considering the nonlinearity of the operator by making use of a tangential cone condition. The method allows many possibilities to choose the search spaces, while including the current gradient guarantees at least the existence of a weakly convergent subsequence. As a special choice of search spaces, we presented a regularizing method that uses two search directions in every iteration, while the structure of the occuring stripes is well understood, which gives rise to a fast algorithm as the order of projection is clearly defined and yields the steepest possible descent. \\
The next step will be the numerical evaluation of our proposed methods. Also, there are several possible theoretical extensions of this work for future research. Obviously our methods can easily be adapted to Banach spaces as it has been done for linear forward operators. Using the common notation in Banach spaces, the search spaces $U_n$ from Algorithm \ref{algo6} are spanned by $J_{q^*}^{X^*}\left(u_k^{\delta}\right)$ where $u_k^{\delta} := F'(x_k^{\delta})^*w_k^{\delta}$ and $w_k^{\delta}=J_Y(F(x_k^{\delta})-y^{\delta})$ for $k\in I_n$ and duality mappings $J_{q^*}^{X^*}:X^*\to X$, $J_Y:Y\to Y^*$. Another option might be a closer investigation of suitable search spaces, keeping in mind Remark \ref{remark_two_dir} (c), which motivates the definition of some sort of measure for the possible gain yielded by the use of a certain search direction.

\bibliographystyle{siam}
\bibliography{bib_nsesop}

\pagestyle{myheadings}
\thispagestyle{plain}
\markboth{A. Wald and T. Schuster}{Sequential Subspace Optimization for Nonlinear Inverse Problems}

\end{document}